\numberwithin{equation}{section}
\DeclareMathAlphabet{\mathpzc}{OT1}{pzc}{m}{it}
\begin{document}
\thispagestyle{empty}
\ArticleName{Unimodularity Criteria for Poisson Structures on Foliated Manifolds}
\ShortArticleName{Unimodularity Criteria for Poisson Structures on Foliated Manifolds}
\Author{Andr\'es Pedroza$^{1}$, Eduardo Velasco-Barreras$^{2}$, and Yury Vorobiev$^{3}$.}
\AuthorNameForHeading{Andr\'es Pedroza, Eduardo Velasco-Barreras, and Yury Vorobiev.}

\noindent\emph{Faculty of Science, University of Colima}

\noindent\emph{Bernal D\'iaz del Castillo 340, Colima, M\'exico, 28045.}$^{1}$\vspace{6pt}

\noindent\emph{Department of Mathematics, University of Sonora}

\noindent\emph{Rosales y Blvd. Luis Encinas, Hermosillo, M\'exico, 83000.}$^{2,3}$\vspace{6pt}

\noindent E-mail adresses: \emph{\href{mailto:andres_pedroza@ucol.mx}{andres\_pedroza@ucol.mx}$^{1}$,\href{mailto:lalo.velasco@mat.uson.mx}{lalo.velasco@mat.uson.mx}$^{2}$,\href{mailto:yurimv@guaymas.uson.mx}{yurimv@guaymas.uson.mx}$^{3}$}.

\Abstract{We study the behavior of the modular class of an orientable Poisson manifold and formulate some unimodularity criteria in the semilocal context, around a (singular) symplectic leaf. In particular, we show that the unimodularity of the transverse Poisson structure of the leaf is a necessary condition for the semilocal unimodular property. Our main tool is an explicit formula for a bigraded decomposition of modular vector fields of a coupling Poisson structure on a foliated manifold.}
\Keywords{Poisson cohomology; Modular class; Unimodularity; Singular foliation; Coupling method; Poisson foliation; Reeb class.}
\Classification{ 53C05; 53C12; 53D17.}

\section{Introduction}\label{intro}

The modular class $\operatorname{Mod}(M,\Pi)$ of an orientable Poisson manifold $(M,\Pi)$ is a distinguished element of the first Poisson cohomology group $H_{\Pi}^{1}(M)$ and gives an obstruction to the existence of a volume form which is invariant under the flow of every Hamiltonian vector field \cite{Koszul,We-97}. If the modular class is trivial, then such an invariant volume form exists and the Poisson manifold is said to be unimodular.

In the regular case, when the rank of the Poisson tensor $\Pi$ is locally constant, we have the following fact \cite{We-97,AB-03}: the modular class $\operatorname{Mod}(M,\Pi)$ is equivalent to the Reeb class $\operatorname{Mod}(\mathcal{S})$ of the regular symplectic foliation $\mathcal{S}$ of $\Pi$ (for the case $\operatorname{codim}\mathcal{S}=1$, see \cite{GMP-2011}). For a transversally orientable regular foliation, the Reeb class is the obstruction to the existence of a closed transversal volume element \cite{Hur-86}. This relationship leads to a geometric criterion: the triviality of the Reeb class of $\mathcal{S}$ is equivalent to the unimodularity of the regular Poisson manifold $(M,\Pi)$ (see also \cite{Cra-03}). As a consequence, the unimodularity of a regular Poisson manifold only depends on its characteristic (symplectic) foliation rather than the leaf-wise symplectic form. Along with the standard approaches \cite{Hur-86,AB-03}, one can characterize the Reeb class in different ways, for example, by using the Bott connection \cite{We-97} or, as the modular class of the associated Lie algebroid \cite{ELW,KS-00}.

In this paper, we are interested in a generalization of these results to the case of Poisson manifolds with singular symplectic foliations, for which it does not exist a direct analog of the Reeb class. For instance, some necessary conditions of unimodularity can be derived from the relationship between the modular class and the linear Poisson holonomy introduced in \cite{GiGo-2001}. Our goal is to study the behavior of the modular class of an orientable Poisson manifold $(M,\Pi)$ and formulate some unimodularity criteria in the semilocal context, around a (singular) symplectic leaf $S$. The semilocal Poisson geometry is related to the study of the so-called coupling Poisson structures on fibered or, more generally, foliated manifolds \cite{Vo-01,Va-04}. As is known \cite{Vo-01}, the Poisson structure near an embedded symplectic leaf $S$ is realized as a coupling Poisson structure. In particular, this fact gives rise to the notion of a transverse Poisson structure $P$ of the leaf $S$ (if $S$ is regular, then $P\equiv0$). Therefore, we address the question on the study of modular classes to the class of coupling Poisson structures.

Due to local Weinstein's splitting theorem \cite{We-83}, the unimodularity of $\Pi$ in a neighborhood of a singular point is provided by the unimodularity of the transverse Poisson structure of the point. In the nonzero dimensional case, we describe some obstructions to the semilocal unimodularity of the leaf which are related to some ``tangential'' and ``transversal'' characteristics of $S$. In particular, we show that the unimodularity of a transverse Poisson structure $P$ of $S$ is a necessary condition for
$\operatorname{Mod}(M,\Pi)=0$ (Proposition \ref{prop:LeafMods}). Moreover, we prove that under the vanishing of the modular class of $P$, some cohomological obstructions possibly appear in the first cohomology of the associated cochain complex \cite{VeVo-16,Vo-05} (Theorem \ref{teo:Unimodular3}). In the case when the neighborhood of the leaf is ``flat'', these obstructions are directly related to the Reeb class of a foliation (Theorem \ref{teo:UnimodFlat}). In particular, this occurs in the regular case.

Our main results are based on an explicit formula for a bigraded decomposition of the modular vector fields of a coupling Poisson structure $\Pi$ on a foliated manifold (Proposition \ref{prop:CoupMod}). This formula involves the modular vector field of the Poisson foliation associated to $\Pi$, which is related to the Reeb class (Proposition \ref{prop:ModsAndReeb}) and whose foliated Poisson cohomology class can be interpreted in terms of a more general notion of the modular class of a triangular Lie bi-algebroid \cite{KS-00}. Also, we study the behavior of the unimodularity property under gauge equivalence \cite{SeWe-01,Br-05} (Proposition \ref{prop:Gauge}). A similar problem for the Morita equivalence of Poisson structures was studied in \cite{GiGo-2001,Cra-03}.

This paper is organized as follows. In Section \ref{sec:OriFol} we review some basic notions and facts about foliations. In Section \ref{sec:FolMod} we study the relationship between the modular class of a Poisson foliation and the Reeb class. The modular vector fields of coupling Poisson structures are described in Section \ref{sec:CoupMod} by using bigraded calculus on foliated manifolds. Section \ref{sec:gauge} is devoted to the study of the behavior of the unimodularity property under gauge transformations. In Section \ref{sec:Unimod} we derive some unimodularity criteria for coupling Poisson structures and find cohomological obstructions to the unimodularity. In Section \ref{sec:Flat} we examine the general unimodularity criteria for compatible Poisson structures on flat Poisson foliations. Here the cohomological obstructions take values on the foliated de Rham-Casimir complex. Finally, in Section \ref{sec:Leaf} we apply the above results to describe the unimodularity in a neighborhood of a symplectic leaf.

\section{Preliminaries: Orientable Foliations}\label{sec:OriFol}

We start by recalling some definitions and known facts about calculus on foliated manifolds (for details, we refer to \cite{Hur-86,KMS-93,Va-04}).

\paragraph{Foliated de Rham Differential.} Let $\mathcal{V}$ be a regular foliation on $M$. Denote by $\mathbb{V}:=T\mathcal{V}$ the tangent bundle of $\mathcal{V}$. There exists a derivation $\operatorname{d}_{\mathcal{V}}\in \operatorname{Der}_{\mathbb{R}}^{1}\Gamma(\wedge^{\bullet}\mathbb{V}^{*})$ of degree $1$ which is a coboundary operator, $\operatorname{d}_{\mathcal{V}}^{2}=0$, called the \emph{foliated exterior derivative}. Notice that the foliated de Rham complex $(\Gamma(\wedge^{\bullet}\mathbb{V}^{*}),\operatorname{d}_{\mathcal{V}})$ is just the cochain complex of the Lie algebroid $\mathbb{V}$ associated to the foliation $\mathcal{V}$. The cohomology of the foliated de Rham complex of $\mathcal{V}$ will be denoted by $H_{\operatorname{dR}}^{\bullet}(\mathcal{V})$.

Fix a normal distribution $\mathbb{H}\subset TM$ of the foliation $\mathcal{V}$,
\begin{equation}
TM=\mathbb{H}\oplus\mathbb{V}. \label{eq:SplitHV0}
\end{equation}
Then, the vector-valued 1-form $\gamma\in\Gamma(T^{*}M\otimes\mathbb{V})$, defined as the projection $\operatorname{pr}_{\mathbb{V}}:TM\rightarrow\mathbb{V}$ along $\mathbb{H}$ in \eqref{eq:SplitHV0}, is said to be a \emph{connection form} on the foliated manifold $(M,\mathcal{V})$. Conversely, every vector-valued 1-form $\gamma\in\Gamma(T^{*}M\otimes\mathbb{V})$ with $\gamma|_{\mathbb{V}}=\operatorname{Id}_{\mathbb{V}}$ induces the normal bundle $\mathbb{H}:=\ker\gamma$ of $\mathcal{V}$. Then, the curvature form $R^{\gamma}\in\Gamma(\wedge^{2}T^{*}M\otimes\mathbb{V})$ of the connection is given by \cite{KMS-93}
\[
R^{\gamma}(X,Y):=\gamma[(\operatorname{Id}_{TM}-\gamma)X,(\operatorname{Id}_{TM}-\gamma)Y] \qquad\forall X,Y\in\Gamma(TM)
\]
and controls the integrability of the normal bundle $\mathbb{H}$. The connection $\gamma$ is said to be \emph{flat} if $R^{\gamma}=0$.

Splitting \eqref{eq:SplitHV0} induces $\mathbb{H}$-dependent bigradings of the exterior algebras of multivector fields and differential forms on $M$:
\begin{equation}\label{Big}
\Gamma(\wedge^{\bullet}TM)=\bigoplus_{p,q\in\mathbb{Z}}\Gamma(\wedge^{p,q}TM), \qquad \Gamma(\wedge^{\bullet}T^{*}M)=\bigoplus_{p,q\in\mathbb{Z}}\Gamma(\wedge^{p,q}T^{*}M).
\end{equation}
Here, $\wedge^{p,q}TM:=\wedge^{p}\mathbb{H}\otimes\wedge^{q}\mathbb{V}$ and $\wedge^{p,q}T^{*}M:=\wedge^{p}\mathbb{V}^{0}\otimes\wedge^{q}\mathbb{H}^{0}$, where $\mathbb{H}^{0}:=\operatorname{Ann}(\mathbb{H})$ and $\mathbb{V}^{0}:=\operatorname{Ann}(\mathbb{V})$ denote the annihilators of $\mathbb{H}$ and $\mathbb{V}$, respectively. For a multivector field $A$, the term of bidegree $(p,q)$ in decomposition \eqref{Big} is denoted by $A_{p,q}$. We follow same notation for differential forms. Moreover, we have a bigraded decomposition for any linear operator on these exterior algebras. In particular, the exterior differential splits as $\operatorname{d}=\operatorname{d}_{1,0}^{\gamma}+\operatorname{d}_{2,-1}^{\gamma}+\operatorname{d}_{0,1}^{\gamma}$, where $\operatorname{d}_{1,0}^{\gamma}$ is the \emph{covariant exterior derivative} of $\gamma$ and $\operatorname{d}_{2,-1}^{\gamma}=-\mathbf{i}_{R^{\gamma}}$. Furthermore,
\begin{equation}\label{eq:LieCurv}
(\operatorname{d}_{1,0}^{\gamma})^{2}=\operatorname{L}_{R^{\gamma}}
\end{equation}
and $(\operatorname{d}_{0,1}^{\gamma})^{2}=0$ (for the definition of the Lie derivative $\operatorname{L}_{R^{\gamma}}$, see \cite{KMS-93}). It is clear that the canonical inclusion of the leaves of $\mathcal{V}$ in $M$ induces a cochain complex isomorphism,
\begin{equation}\label{eq:IsoDeRhamFol}
\left(\Gamma(\wedge^{\bullet}\mathbb{V}^{*}),\operatorname{d}_{\mathcal{V}}\right)\cong\left(\Gamma(\wedge^{\bullet}\mathbb{H}^{0}),\operatorname{d}_{0,1}^{\gamma}\right).
\end{equation}
For each $\mu\in\Gamma(\wedge^{\bullet}\mathbb{V}^{*})$, we will denote by $\mu_{\gamma}\in\Gamma(\wedge^{\bullet}\mathbb{H}^{0})$ the corresponding element under the above isomorphism. We use the same notation for cohomology classes.

We will denote by
\[
\mathfrak{X}_{\mathrm{pr}}(M,\mathcal{V}) := \{X\in\mathfrak{X}(M)\mid[X,\Gamma(\mathbb{V})]\subset\Gamma(\mathbb{V})\}
\]
the Lie subalgebra of $\mathcal{V}$-\emph{projectable vector fields}. For each $\mu\in\Gamma(\wedge^{\bullet}\mathbb{V}^{*})$ and $X\in
\mathfrak{X}_{\mathrm{pr}}(M,\mathcal{V})$, the Lie derivative $\operatorname{L}_{X}\mu\in\Gamma(\wedge^{\bullet}\mathbb{V}^{*})$ is well-defined by the
standard formula.

\paragraph{Divergence 1-Form.} Suppose that the foliation $\mathcal{V}$ on $M$ is \emph{orientable}, that is, there exists a nowhere vanishing element $\tau\in\Gamma(\wedge^{\operatorname{top}}\mathbb{V}^{*})$, called a \emph{leaf-wise volume form} of $\mathcal{V}$. Therefore, the restriction of $\tau$ to each leaf $L$ of $\mathcal{V}$ gives a volume form on $L$. For each $X\in\mathfrak{X}_{\mathrm{pr}}(M,\mathcal{V})$, the divergence $\operatorname{div}^{\tau}(X)\in C^{\infty}(M)$ with respect to $\tau$ is defined by the relation $\operatorname{L}_{X}\tau=\operatorname{div}^{\tau}(X)\tau$.

Fix a connection form $\gamma$ on $(M,\mathcal{V})$ associated to a normal bundle $\mathbb{H}$ of $\mathcal{V}$. Then, one can think of a leaf-wise volume form $\tau\in\Gamma(\wedge^{\operatorname{top}}\mathbb{V}^{*})$ as a differential form $\tau_{\gamma}\in\Gamma(\wedge^{\operatorname{top}}\mathbb{H}^{0})$ vanishing only on the sections of
$\mathbb{H}$. Recall that $\mathbb{H}^{0}$ and $\mathbb{V}$ are of the same rank $k$. The divergence is given by the formula
\begin{equation}
\operatorname{div}^{\tau}(X)\tau_{\gamma}=(\operatorname{L}_{X}\tau_{\gamma})_{0,k} \label{eq:DivConnection}
\end{equation}
for any $X\in\mathfrak{X}_{\mathrm{pr}}(M,\mathcal{V})$. Here, the bigraded decomposition of the $k$-form $\operatorname{L}_{X}\tau_{\gamma}$ consists of the terms of bidegree $(0,k)$ and $(1,k-1)$.

Now, we observe that there exists a unique 1-form vanishing on vector fields tangent to $\mathcal{V}$, $\theta_{\tau}^{\gamma}\in\Gamma(\mathbb{V}^{0})$, and such that
\begin{equation}\label{eq:DefTheta}
\operatorname{d}_{1,0}^{\gamma}\tau_{\gamma}=\theta_{\tau}^{\gamma}\wedge\tau_{\gamma}.
\end{equation}
Denote by $\Gamma_{\mathrm{pr}}(\mathbb{H}) := \mathfrak{X}_{\mathrm{pr}}(M,\mathcal{V}) \cap \Gamma(\mathbb{H})$ the set of all projectable sections of $\mathbb{H}$. Then, $\theta_{\tau}^{\gamma}$ is related with the divergence by the condition
\begin{equation}\label{eq:Theta1}
\theta_{\tau}^{\gamma}(X)=\operatorname{div}^{\tau}(X)\qquad\forall X\in\Gamma_{\mathrm{pr}}(\mathbb{H}).
\end{equation}
Therefore, the 1-form $\theta_{\tau}^{\gamma}\in\Gamma(\mathbb{V}^{0})$ can be called the \emph{divergence form} associated to the pair $(\tau,\gamma)$. By using \eqref{eq:DefTheta}, \eqref{eq:LieCurv}, and \eqref{eq:DivConnection}, one can derive the following useful relations
\begin{align}
\operatorname{d}_{1,0}^{\gamma}\theta_{\tau}^{\gamma}(X_{1},X_{2}) &= \operatorname{div}^{\tau}(R^{\gamma}(X_{1},X_{2})),\label{eq:Theta2}\\
\theta_{f\tau}^{\gamma}&=\theta_{\tau}^{\gamma}+\operatorname{d}_{1,0}^{\gamma}\ln|f|,\label{eq:Theta3}
\end{align}
for all $X_{1},X_{2}\in\Gamma_{\mathrm{pr}}(\mathbb{H})$ and each nowhere vanishing $f\in C^{\infty}(M)$.

\paragraph{The Reeb Class.} Let $\mathcal{V}$ be a regular foliation of $M$. Consider the tangent bundle $\mathbb{V}=T\mathcal{V}$ and its annihilator $\mathbb{V}^{0}$. We say that the foliation $\mathcal{V}$ is \emph{transversally orientable} if there exists a nowhere vanishing element $\varsigma\in\Gamma(\wedge^{\operatorname{top}}\mathbb{V}^{0})$. In this case, we say that $\varsigma$ is a \emph{transversal volume element} of $\mathcal{V}$. In particular, we have $\mathbb{V}=\{X\in TM \mid\mathbf{i}_{X}\varsigma=0\}$.

It follows from the identity $\mathbf{i}_{[X,Y]}=[\operatorname{L}_{X},\mathbf{i}_{Y}]$ $\forall X,Y\in\Gamma(\mathbb{V})$ that the Lie derivative along every $\mathcal{V}$-tangent vector field preserves the space of sections
$\Gamma(\wedge^{\operatorname{top}}\mathbb{V}^{0})$. As a consequence, for each transversal volume element $\varsigma$ of $\mathcal{V}$, there exists a unique foliated 1-form $\lambda_{\varsigma}\in\Gamma(\mathbb{V}^{*})$ defined by the relation
\begin{equation}\label{eq:Reeb0}
\operatorname{L}_{X}\varsigma=\lambda_{\varsigma}(X)\varsigma, \qquad\forall X\in\Gamma(\mathbb{V}).
\end{equation}
Then, $\lambda_{\varsigma}$ is a closed foliated 1-form, $\operatorname{d}_{\mathcal{V}}\lambda_{\varsigma}=0$. Moreover, by the standard arguments, the $\operatorname{d}_{\mathcal{V}}$-cohomology class (foliated de Rham cohomology class)
\begin{equation}\label{eq:Reeb}
\operatorname{Mod}(\mathcal{V}):=[\lambda_{\varsigma}]\in H_{\operatorname{dR}}^{1}(\mathcal{V}),
\end{equation}
is independent of the choice of a transversal volume element $\varsigma$ and called the \emph{Reeb class} of $\mathcal{V}$ (see, for example, \cite{Hur-86,Hur-02,AB-03}). The Reeb class is an obstruction to the existence of a transversal volume element of $\mathcal{V}$ which is invariant under the flow of any vector field tangent to the foliation. Alternatively, the vanishing of $\operatorname{Mod}(\mathcal{V})$ is equivalent to the existence of a closed transversal volume element of $\mathcal{V}$ \cite{Hur-86}.

\begin{example}\label{ex:FiberReeb}
Let $\pi:M\rightarrow S$ be a fiber bundle over an orientable base $S$. Consider the foliation $\mathcal{V}:=\{\pi^{-1}(x)\}_{x\in S}$ on $M$ given by the surjective submersion $\pi$, called \emph{simple foliation}. Then, $\mathcal{V}$ is a transversally orientable foliation with trivial Reeb class. Indeed, given a volume form $\varsigma_{0}$ on the base $S$, we get the transversal volume element $ \varsigma=\pi^{*}\varsigma_{0}$ of $\mathcal{V}$. It is clear that $\varsigma$ is closed on $M$ and hence $\operatorname{Mod}(\mathcal{V})=0$.
\end{example}

Pick a connection $\gamma$ on $(M,\mathcal{V})$ associated to a normal bundle $\mathbb{H}$ of $\mathcal{V}$. For each transversal volume element $\varsigma$ of $\mathcal{V}$, there exists a 1-form $\lambda_{\varsigma}^{\gamma}\in\Gamma(\mathbb{H}^{0})$ uniquely defined by the relation
\begin{equation}\label{R3}
\operatorname{d}_{0,1}^{\gamma}\varsigma=\lambda_{\varsigma}^{\gamma}\wedge\varsigma.
\end{equation}
From here, taking into account the bidegrees of $\lambda_{\varsigma}^{\gamma}$ and $\varsigma$, we conclude that $\operatorname{d}_{0,1}^{\gamma}\lambda_{\varsigma}^{\gamma}=0$ and hence $\lambda_{\varsigma}^{\gamma}$ is a 1-cocycle of $\operatorname{d}_{0,1}^{\gamma}$. Then, under the isomorphism \eqref{eq:IsoDeRhamFol}, the Reeb class of the foliation $\mathcal{V}$ equals the $\operatorname{d}_{0,1}^{\gamma}$-cohomology class of $\lambda_{\varsigma}^{\gamma}$,
\begin{equation}\label{R4}
\operatorname{Mod}(\mathcal{V})_{\gamma}=[\lambda_{\varsigma}^{\gamma}].
\end{equation}
Indeed, this is consequence of the following computation for all $X\in\Gamma(\mathbb{V})$:
\[
\lambda_{\varsigma}(X)\varsigma=\operatorname{L}_{X}\varsigma=\operatorname{d}\mathbf{i}_{X}\varsigma
+\mathbf{i}_{X}\operatorname{d}\varsigma=\mathbf{i}_{X}\operatorname{d}_{0,1}^{\gamma}\varsigma
=\lambda_{\varsigma}^{\gamma}(X)\varsigma.
\]
Observe that in the flat case, $R^{\gamma}=0$, each leaf-wise volume form $\tau\in\Gamma(\wedge^{\operatorname{top}}\mathbb{V}^{*})$ of $\mathcal{V}$ induces the transversal volume element $\tau_{\gamma}$ of the integral foliation $\mathcal{H}$ of $\mathbb{H}$. Furthermore, $\partial_{\mathcal{H}}:=\operatorname{d}_{1,0}^{\gamma}$ is the corresponding foliated exterior derivative and, by \eqref{eq:Theta2}, $\theta_{\tau}^{\gamma}$ is a 1-cocycle of $\partial_{\mathcal{H}}$. Then, taking into account \eqref{R4}, we conclude that the $\partial_{\mathcal{H}}$-cohomology class of $\theta_{\tau}^{\gamma}$ coincides with the Reeb class of $\mathcal{H}$.

To end this section we make the following remarks on the different interpretations of the Reeb class.

\begin{remark}
\bigskip The Reeb class is related to some characteristic classes of representations of the Lie algebroid $\mathbb{V}$ associated to the foliation $\mathcal{V}$ \cite{ELW,KS-08}. First, note that the Lie derivative along $\mathcal{V}$-tangent vector fields gives a representation $D$ on the line bundle $\wedge^{\operatorname{top}}\mathbb{V}^{0}$. By \eqref{eq:Reeb0} and \eqref{eq:Reeb}, the Reeb class is just the characteristic class of this representation, $\operatorname{Mod}(\mathcal{V})=\operatorname{Char}(\wedge^{\operatorname{top}}\mathbb{V}^{0})$. On the other hand, the Reeb class can be expressed in terms of the Bott connection $\nabla^{\operatorname{Bott}}$ on the normal bundle $E:=\frac{TM}{\mathbb{V}}$ of $\mathcal{V}$ \cite{We-97}. Under the natural identification $\mathbb{V}^{0}\cong E^{*}$, the dual of the representation $\nabla^{\operatorname{Bott}}$ on $\wedge^{\operatorname{top}}E$ coincides with $D$. Then, $\operatorname{Char}(\wedge^{\operatorname{top}}E)=-\operatorname{Mod}(\mathcal{V})$. Finally, we observe that the Reeb class coincides with the modular class of the Lie algebroid $\mathbb{V}$ \cite{ELW,KS-00,KS-08}, $\operatorname{Mod}(\mathcal{V}) = \operatorname{Char}(\wedge^{\operatorname{top}}\mathbb{V}\otimes\wedge^{\operatorname{top}}T^{*}M)$.
\end{remark}

\section{The Modular Class of a Poisson Foliation}\label{sec:FolMod}

In this section, we describe the relationship between the modular class of a leaf-tangent Poisson structure on a foliated manifold and the Reeb class.

First, let us recall the definitions and some properties of modular vector fields and the modular class of a Poisson manifold \cite{Koszul,We-97}. Let $(M,\Pi)$ be an orientable Poisson manifold with Poisson bivector field $\Pi$ on $M$. Denote by $\Pi^{\sharp}:T^{*}M\rightarrow TM$ the vector bundle morphism given by $\langle\beta,\Pi^{\sharp}\alpha\rangle:=\Pi(\alpha,\beta)$. Let $\operatorname{Poiss}(M,\Pi) := \{X\in\Gamma(TM)\mid L_{X}\Pi=0\}$ and $\operatorname{Ham}(M,\Pi) := \{\Pi^{\sharp}\operatorname{d}f\mid f\in C^{\infty}(M)\}$ be the Lie algebras of Poisson and Hamiltonian vector fields on $(M,\Pi)$, respectively. Then, the first Poisson cohomology is $H_{\Pi}^{1}(M) = \frac{\operatorname{Poiss}(M,\Pi)}{\operatorname{Ham}(M,\Pi)}$.

Given a volume form $\Omega$ of $M$, one can define a derivation $Z_{\Pi}^{\Omega}$ of $C^{\infty}(M)$ by the formula $Z_{\Pi}^{\Omega}(f) := \operatorname{div}^{\Omega} (\Pi^{\sharp}\operatorname{d}f)$, where $\operatorname{div}^{\Omega}$ is the divergence operator. The vector field $Z_{\Pi}^{\Omega}$ is a Poisson vector field of $\Pi$, called the \emph{modular vector field} \cite{Koszul,We-97} of the oriented Poisson manifold $(M,\Pi,\Omega)$.

In terms of the interior product, the modular vector field can be also defined by $-\mathbf{i}_{Z_{\Pi}^{\Omega}}\Omega=\operatorname{d}\mathbf{i}_{\Pi}\Omega$. Here, $\mathbf{i}$ denotes the insertion operator which on decomposable multivector fields is given by $\mathbf{i}_{X_{1}\wedge\ldots\wedge X_{k}}=\mathbf{i}_{X_{1}}\circ\ldots\circ\mathbf{i}_{X_{k}}$ $\forall X_{i}\in\Gamma(TM)$. Furthermore, if $\Omega^{\prime}$ is another volume form on $M$, then $Z_{\Pi}^{\Omega^{\prime}}=Z_{\Pi}^{\Omega}-\Pi^{\sharp}\operatorname{d}\ln|f|$, where $f=\Omega^{\prime}/\Omega$. Hence, the Poisson cohomology class of $Z_{\Pi}^{\Omega}$ is independent of the choice of $\Omega$. Therefore, $\operatorname{Mod}(M,\Pi) :=[Z_{\Pi}^{\Omega}]\in H_{\Pi}^{1}(M)$ is an intrinsic Poisson cohomology class called the \emph{modular class} \cite{We-97} of the orientable Poisson manifold $(M,\Pi)$. A Poisson manifold with vanishing modular class is said to be \emph{unimodular}. The modular class is an obstruction to the existence of a volume form which is invariant with respect to all Hamiltonian vector fields.

As an example, consider the 3-dimensional oriented (linear) Poisson manifold $(\mathbb{R}^{3}_{(x,y,z)},\Pi,\Omega)$, where $\Pi=\tfrac{1}{2}\tfrac{\partial}{\partial z}\wedge\left(x\tfrac{\partial}{\partial x}+y\tfrac{\partial}{\partial y}\right)$ and $\Omega$ is the Euclidean volume form. Then, $Z^{\Omega}_{\Pi}=\tfrac{\partial}{\partial z}$ cannot be a Hamiltonian vector field since it is non-zero at $0$. Moreover, on the regular domain $N_{\mathrm{reg}}:=\mathbb{R}^{3}-\{z\text{-axis}\}$, $\tfrac{\partial}{\partial z}$ admits the Hamiltonian $-\ln(x^{2}+y^{2})$. Thus, even though $\operatorname{Mod}(N_{\mathrm{reg}},\Pi|_{N_{\mathrm{reg}}})=0$, the Poisson manifold $(\mathbb{R}^{3}_{(x,y,z)},\Pi)$ is not unimodular.

\paragraph{Poisson Foliations and Orientability.} A \emph{Poisson foliation} consists of a triple $(M,\mathcal{V},P)$, where $\mathcal{V}$ is a regular foliation on a manifold $M$ endowed with a leaf-tangent Poisson bivector field $P\in\Gamma(\wedge^{2}\mathbb{V}),\mathbb{V}=T\mathcal{V}$. It is clear that the characteristic distribution of $P$ belongs to $\mathbb{V}$, $P^{\sharp}(T^{*}M)\subset\mathbb{V}$, and hence each leaf $L$ of $\mathcal{V}$ inherits from $P$ a unique Poisson structure $P_{L}$ such that the inclusion $\iota_{L}:L\hookrightarrow M$ is a Poisson map. Therefore, $M$ is foliated by the Poisson manifolds $(L,P_{L})$. Denote by $\operatorname{Poiss}_{\mathcal{V}}(M,P) := \Gamma(\mathbb{V}) \cap \operatorname{Poiss}(M,P)$ the Lie algebra of all Poisson vector fields of $P$ tangent to the foliation $\mathcal{V}$. It is clear that, for every $Z\in\operatorname{Poiss}_{\mathcal{V}}(M,P)$, the restriction to a given leaf $L$ of $\mathcal{V}$ is a Poisson vector field of $P_{L}$, $Z|_{L}\in\operatorname{Poiss}(L,P_{L})$. Note also that the morphism $P^{\sharp}:\Gamma(\mathbb{V}^{\mathbb{*}})\rightarrow\Gamma(\mathbb{V})$ associated to the leaf-tangent Poisson structure $P$ induces a linear mapping in cohomology $(P^{\sharp})^{*}:H_{\operatorname{dR}}^{1}(\mathcal{V}) \rightarrow \frac{\operatorname{Poiss}_{\mathcal{V}}(M,P)}{\operatorname{Ham}(M,P)}\subset H_{P}^{1}(M)$ by $(P^{\sharp})^{*}[\mu]:=[P^{\sharp}\mu]$. We call the quotient $\frac{\operatorname{Poiss}_{\mathcal{V}}(M,P)}{\operatorname{Ham}(M,P)}$ the \emph{first cohomology of the Poisson foliation} $(M,\mathcal{V},P)$, which is just the first cohomology of the Lie algebroid $(\mathbb{V}^{*},\iota\circ P^{\sharp},\{,\}_{P})$. Here, $\iota:\mathbb{V}\hookrightarrow TM$ is the inclusion map, and $\{,\}_{P}$ denotes the bracket of foliated 1-forms induced by $P$ \cite{KS-00}.

Suppose that $\mathcal{V}$ is orientable and fix a leaf-wise volume form $\tau\in\Gamma(\wedge^{\operatorname{top}}\mathbb{V}^{*})$. The \emph{modular vector field of the Poisson foliation} $(M,\mathcal{V},P)$ with respect to $\tau$ is the leaf-tangent vector field $Z_{P}^{\tau}\in\Gamma(\mathbb{V})$ defined by the equality
\begin{equation}\label{eq:DefFolMod}
\mathbf{i}_{Z_{P}^{\tau}}\tau=-\operatorname{d}_{\mathcal{V}}\mathbf{i}_{P}\tau.
\end{equation}
It follows from \eqref{eq:DefFolMod} that $Z_{P}^{\tau}\in\operatorname{Poiss}(M,P)$ and $Z_{P}^{f\tau}=Z_{P}^{\tau}-P^{\sharp}\operatorname{d}f$ for all nowhere vanishing $f\in C^{\infty}(M)$. Hence, there is a well-defined cohomology class of the Poisson foliation $(M,\mathcal{V},P)$
\begin{equation*}
\operatorname{Mod}(M,\mathcal{V},P):=[Z_{P}^{\tau}]\in\frac{\operatorname{Poiss}_{\mathcal{V}}(M,P)}{\operatorname{Ham}(M,P)}
\end{equation*}
which can be called the \emph{modular class} of the Poisson foliation $(M,\mathcal{V},P)$, or shortly, the \emph{foliated modular class}.

It is clear that in the case when a Poisson foliation $\mathcal{V}=\{M\}$ consists of a single leaf, the foliated modular class of $(M,\mathcal{V},P)$ just coincides with the modular class of the Poisson manifold $(M,P)$.

Note that the modular class of a Poisson foliation $(M,\mathcal{V},P)$ can be viewed as a particular case of the more general notion of the modular class of the corresponding triangular Lie bi-algebroid $(\mathbb{V},P)$, $P\in\Gamma(\wedge^{2}\mathbb{V})$ \cite{KS-00,KS-08}.

The Poisson foliation $(M,\mathcal{V},P)$ is said to be \emph{unimodular} if $\operatorname{Mod}(M,\mathcal{V},P)=0$. Since the foliated differential $\operatorname{d}_{\mathcal{V}}$ and the leaf-wise volume form $\tau$ restrict to the exterior differential and a volume form $\tau_{L}=\iota_{L}^{*}\tau$ on each leaf $L$ of $\mathcal{V}$, we conclude from \eqref{eq:DefFolMod} that the restriction of the modular vector field $Z_{P}^{\tau}$ to the leaf $L$ is the modular vector field with respect to $\tau_{L}$ of the Poisson structure $P_{L}$, $Z_{P}^{\tau}|_{L}=Z_{P_{L}}^{\tau_{L}}$. Therefore, the unimodularity of the Poisson foliation implies the unimodularity of each leaf. But the converse is not necessarily true.

Here are some useful properties of the modular vector field of the Poisson foliation $(M,\mathcal{V},P)$. Note that, for all $X\in\mathfrak{X}_{\mathrm{pr}}(M,\mathcal{V})$, we have $[X,\Gamma(\wedge^{\bullet}\mathbb{V})]\subset\Gamma(\wedge^{\bullet}\mathbb{V})$, where $[\cdot,\cdot]$ denotes the Schouten-Nijenhuis bracket for multivector fields \cite{DZ}. By definition \eqref{eq:DefFolMod}, and from the standard commuting relations between the operators $\operatorname{L}_{X}$, $\operatorname{d}_{\mathcal{V}}$, and $\mathbf{i}_{A}$,
$A\in\Gamma(\wedge^{\bullet}\mathbb{V})$, we derive the following properties of the modular vector field $Z_{P}^{\tau}$
\begin{align}
\operatorname{L}_{Z_{P}^{\tau}}f&=\operatorname{div}^{\tau}(P^{\sharp}\operatorname{d}f),\label{eq:DivHam}\\
[Z_{P}^{\tau},X]&=P^{\sharp}\operatorname{d}(\operatorname{div}^{\tau}(X)),\label{eq:ModAndPoiss}
\end{align}
for any $f\in C^{\infty}(M)$ and $X\in\mathfrak{X}_{\mathrm{pr}}(M,\mathcal{V})\cap\operatorname{Poiss}(M,P)$.

Furthermore, given a connection form $\gamma\in\Gamma(T^{*}M\otimes\mathbb{V})$ on $(M,\mathcal{V})$, the modular vector field of $(M,\mathcal{V},P)$ relative to $\tau\in\Gamma(\wedge^{\operatorname{top}}\mathbb{V}^{*})$ is determined by
\begin{equation}
-\mathbf{i}_{Z_{P}^{\tau}}\tau_{\gamma}=\operatorname{d}_{0,1}^{\gamma}\mathbf{i}_{P}\tau_{\gamma}.\label{eq:FolModConnect}
\end{equation}

If, in addition to the orientability of $\mathcal{V}$, the manifold $M$ is orientable (or, equivalently, $\mathcal{V}$ is transversally orientable), then we have a relation between the modular class of the Poisson structure $P$ on $M$, the modular class of the Poisson foliation $(M,\mathcal{V},P)$ and the Reeb class of $\mathcal{V}$.

\begin{proposition}\label{prop:ModsAndReeb}
Let $(M,\mathcal{V},P)$ be an orientable and transversally orientable Poisson foliation. Then, the modular class $\operatorname{Mod}(M,P)$ of the Poisson manifold $P\in\Gamma(\wedge^{2}\mathbb{V})$ is related to the foliated modular class $\operatorname{Mod}(M,\mathcal{V},P)$ and the Reeb class $\operatorname{Mod}(\mathcal{V})$ of $\mathcal{V}$ by the formula
\begin{equation}\label{eq:ModsAndReeb}
\operatorname{Mod}(M,P)=\operatorname{Mod}(M,\mathcal{V},P)-(P^{\sharp})^{*}\operatorname{Mod}(\mathcal{V}).
\end{equation}
\end{proposition}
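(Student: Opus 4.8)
The plan is to work with one carefully chosen volume form on $M$, compute its modular vector field explicitly, and thereby reduce the cohomological identity \eqref{eq:ModsAndReeb} to an identity between representatives. Fix a connection form $\gamma$ with normal bundle $\mathbb{H}$, a leaf-wise volume form $\tau$ with associated $\tau_{\gamma}\in\Gamma(\wedge^{\operatorname{top}}\mathbb{H}^{0})$, and a transversal volume element $\varsigma\in\Gamma(\wedge^{\operatorname{top}}\mathbb{V}^{0})$ representing $\operatorname{Mod}(\mathcal{V})$ through $\lambda_{\varsigma}$. Since $\dim M$ splits as $\operatorname{rank}\mathbb{H}+\operatorname{rank}\mathbb{V}$, the product $\Omega:=\varsigma\wedge\tau_{\gamma}$ is a volume form on $M$. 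I would then prove the representative-level identity $Z^{\Omega}_{P}=Z^{\tau}_{P}-P^{\sharp}\lambda_{\varsigma}$; passing to classes in $H^{1}_{P}(M)$ and using $(P^{\sharp})^{*}[\lambda_{\varsigma}]=[P^{\sharp}\lambda_{\varsigma}]$ together with the definitions of the three modular classes yields \eqref{eq:ModsAndReeb} immediately.

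To establish this identity I would use the characterization $-\mathbf{i}_{Z^{\Omega}_{P}}\Omega=\operatorname{d}\mathbf{i}_{P}\Omega$. First, because $P\in\Gamma(\wedge^{2}\mathbb{V})$ while $\varsigma$ annihilates $\mathbb{V}$, one has $\mathbf{i}_{P}\varsigma=0$, hence $\mathbf{i}_{P}\Omega=\varsigma\wedge\mathbf{i}_{P}\tau_{\gamma}$ with $\mathbf{i}_{P}\tau_{\gamma}$ of bidegree $(0,k-2)$, where $k=\operatorname{rank}\mathbb{V}$. Expanding $\operatorname{d}(\varsigma\wedge\mathbf{i}_{P}\tau_{\gamma})$ by the Leibniz rule and decomposing $\operatorname{d}=\operatorname{d}^{\gamma}_{1,0}+\operatorname{d}^{\gamma}_{2,-1}+\operatorname{d}^{\gamma}_{0,1}$, the crucial point is a bidegree count: any term whose $\mathbb{V}^{0}$-degree exceeds $\operatorname{rank}\mathbb{V}^{0}=\operatorname{rank}\mathbb{H}$ vanishes after wedging with the top form $\varsigma$. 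This kills the $\operatorname{d}^{\gamma}_{1,0}$ and $\operatorname{d}^{\gamma}_{2,-1}$ contributions to $\operatorname{d}(\mathbf{i}_{P}\tau_{\gamma})$ and forces $\operatorname{d}\varsigma=\operatorname{d}^{\gamma}_{0,1}\varsigma=\lambda^{\gamma}_{\varsigma}\wedge\varsigma$ by \eqref{R3}. Rewriting $\operatorname{d}^{\gamma}_{0,1}\mathbf{i}_{P}\tau_{\gamma}=-\mathbf{i}_{Z^{\tau}_{P}}\tau_{\gamma}$ via \eqref{eq:FolModConnect}, one is left with a sum of two terms, each of the form $\varsigma\wedge(\text{a }(0,k-1)\text{-form})$.

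The remaining algebraic step is the contraction identity $\alpha\wedge\mathbf{i}_{P}\omega=\mathbf{i}_{P}(\alpha\wedge\omega)+\mathbf{i}_{P^{\sharp}\alpha}\omega$, valid for a $1$-form $\alpha$ and a bivector $P$. Applying it with $\alpha=\lambda^{\gamma}_{\varsigma}$ and $\omega=\tau_{\gamma}$, and noting that $\lambda^{\gamma}_{\varsigma}\wedge\tau_{\gamma}=0$ because this form has $\mathbb{H}^{0}$-degree $k+1>\operatorname{rank}\mathbb{H}^{0}$, converts the $\operatorname{d}\varsigma$-term into $\varsigma\wedge\mathbf{i}_{P^{\sharp}\lambda^{\gamma}_{\varsigma}}\tau_{\gamma}$; here $P^{\sharp}\lambda^{\gamma}_{\varsigma}=P^{\sharp}\lambda_{\varsigma}$ under the identification \eqref{eq:IsoDeRhamFol}, since $P^{\sharp}$ annihilates $\mathbb{V}^{0}$. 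Collecting the two contributions gives $\operatorname{d}\mathbf{i}_{P}\Omega=\mathbf{i}_{P^{\sharp}\lambda_{\varsigma}-Z^{\tau}_{P}}\Omega$, after using $\mathbf{i}_{W}\Omega=(-1)^{\operatorname{rank}\mathbb{H}}\varsigma\wedge\mathbf{i}_{W}\tau_{\gamma}$ for $W\in\Gamma(\mathbb{V})$, and injectivity of $W\mapsto\mathbf{i}_{W}\Omega$ then yields $Z^{\Omega}_{P}=Z^{\tau}_{P}-P^{\sharp}\lambda_{\varsigma}$. I expect the main obstacle to be essentially bookkeeping: keeping the bidegree truncations and the signs in the Leibniz expansion consistent, and correctly matching $P^{\sharp}\lambda^{\gamma}_{\varsigma}$ with $P^{\sharp}\lambda_{\varsigma}$ through the isomorphism \eqref{eq:IsoDeRhamFol}. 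The conceptual content is entirely in the vanishing of the off-diagonal terms for degree reasons, which forces everything onto the leaf-tangent, $\operatorname{d}^{\gamma}_{0,1}$-component where the foliated modular vector field and the Reeb form live.
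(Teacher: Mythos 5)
Your proposal is correct and takes essentially the same route as the paper's own proof: the same volume form $\Omega=\varsigma\wedge\tau_{\gamma}$, the same representative-level identity $Z_{P}^{\Omega}=Z_{P}^{\tau}-P^{\sharp}\lambda_{\varsigma}^{\gamma}$ obtained by expanding $\operatorname{d}\mathbf{i}_{P}\Omega$ with bidegree counting, \eqref{R3}, \eqref{eq:FolModConnect}, and the contraction identity \eqref{eq:Formula}, followed by passage to cohomology classes. The only difference is cosmetic (you phrase the final step through $\lambda_{\varsigma}$ rather than $\lambda_{\varsigma}^{\gamma}$, justified correctly via the identification \eqref{eq:IsoDeRhamFol}), so there is nothing to add.
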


\begin{proof}
Let $\tau\in\Gamma(\wedge^{\operatorname{top}}\mathbb{V}^{*})$ be a leaf-wise volume form and $\varsigma\in\Gamma(\wedge^{s}\mathbb{V}^{0})$ a transversal volume element of $\mathcal{V}$, $s=\operatorname{codim}\mathcal{V}$. Pick a connection form $\gamma$ on $(M,\mathcal{V})$ associated to a normal bundle $\mathbb{H}$ of $\mathcal{V}$. Then, $\Omega:=\varsigma\wedge\tau_{\gamma}$ is a volume form on $M$. Let $Z_{P}^{\Omega}$ and $Z_{P}^{\tau}$ be the modular vector fields of $(M,P)$ and $(M,\mathcal{V},P)$ with respect to the volume forms $\Omega$ and $\tau$, respectively. Consider also the 1-form $\lambda_{\varsigma}^{\gamma}\in\Gamma(\mathbb{H}^{0})$ given by \eqref{R3}. We claim that
\begin{equation}\label{eq:ModsAndReeb2}
Z_{P}^{\Omega}=Z_{P}^{\tau}-P^{\sharp}\lambda_{\varsigma}^{\gamma}.
\end{equation}
Indeed, by bigrading arguments and equality \eqref{eq:FolModConnect}, we get
\begin{align*}
-\mathbf{i}_{Z_{P}^{\Omega}}\Omega &= \operatorname{d}\mathbf{i}_{P}\Omega = \operatorname{d}_{0,1}^{\gamma}(\varsigma\wedge\mathbf{i}_{P}\tau_{\gamma}) = \operatorname{d}_{0,1}^{\gamma}\varsigma\wedge\mathbf{i}_{P}\tau_{\gamma} + (-1)^{s}\varsigma\wedge \operatorname{d}_{0,1}^{\gamma}\mathbf{i}_{P}\tau_{\gamma}\\
&= \lambda_{\varsigma}^{\gamma}\wedge\varsigma\wedge\mathbf{i}_{P}
\tau_{\gamma}-(-1)^{s}\varsigma\wedge\mathbf{i}_{Z_{P}^{\tau}}\tau_{\gamma} = (-1)^{s}\varsigma\wedge\mathbf{i}_{\mathbf{i}_{\lambda_{\varsigma}^{\gamma}}P}\tau_{\gamma} - (-1)^{s}\varsigma\wedge\mathbf{i}_{Z_{P}^{\tau}}\tau_{\gamma}\\
&=\mathbf{i}_{P^{\sharp}\lambda_{\varsigma}^{\gamma}}\Omega-\mathbf{i}_{Z_{P}^{\tau}}\Omega.
\end{align*}
Here we have applied, on the second and fifth steps, the identity
\begin{equation}\label{eq:Formula}
\mathbf{i}_{A}(\alpha\wedge\beta)=(-1)^{|A|}(\alpha\wedge\mathbf{i}_{A}\beta-\mathbf{i}_{\mathbf{i}_{\alpha}A}\beta),
\end{equation}
valid for all $\alpha\in\Gamma(T^{*}M),\beta\in\Gamma(\wedge^{\bullet}T^{*}M)$ and $A\in\Gamma(\wedge^{\bullet}TM)$. Thus, we have proved \eqref{eq:ModsAndReeb2}, which implies \eqref{eq:ModsAndReeb}.
\end{proof}

The following corollary to Proposition \ref{prop:ModsAndReeb} gives us a unimodularity criterion for a class of Poisson foliations coming from fibrations.

\begin{corollary}\label{cor:LocallyTrivialMod}
Let $(M\overset{\pi}{\rightarrow}S,P)$ be a locally trivial Poisson fiber bundle. Suppose that the total space $M$ and the
base $S$ are orientable. If the typical fiber $F$ is a unimodular Poisson manifold, then $\operatorname{Mod}(M,P)=0$.
\end{corollary}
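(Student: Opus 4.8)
The plan is to read off $\operatorname{Mod}(M,P)$ from the decomposition \eqref{eq:ModsAndReeb} and show that both terms on its right-hand side vanish. The fibres of $\pi$ form the simple foliation $\mathcal{V}=\{\pi^{-1}(x)\}_{x\in S}$, and $P\in\Gamma(\wedge^{2}\mathbb{V})$ is leaf-tangent, so $(M,\mathcal{V},P)$ is a Poisson foliation. Before invoking Proposition \ref{prop:ModsAndReeb}, I would check that $\mathcal{V}$ is orientable and transversally orientable: since $\mathbb{V}^{0}\cong\pi^{*}T^{*}S$ and $\mathbb{H}\cong\pi^{*}TS$, orientability of $S$ trivialises $\wedge^{\operatorname{top}}\mathbb{V}^{0}$, and then $\wedge^{\operatorname{top}}\mathbb{V}\cong\wedge^{\operatorname{top}}TM\otimes(\wedge^{\operatorname{top}}\mathbb{H})^{*}$ is trivial because $M$ is orientable. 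Hence Proposition \ref{prop:ModsAndReeb} applies.

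The transversal term is immediate: by Example \ref{ex:FiberReeb} the simple foliation of a bundle over an orientable base has $\operatorname{Mod}(\mathcal{V})=0$, witnessed by the closed transversal volume element $\varsigma=\pi^{*}\omega_{S}$. Therefore $(P^{\sharp})^{*}\operatorname{Mod}(\mathcal{V})=0$, and by \eqref{eq:ModsAndReeb} the whole statement reduces to proving that the foliated modular class $\operatorname{Mod}(M,\mathcal{V},P)$ vanishes.

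Here is where local triviality and the unimodularity of $F$ enter. Over a trivialising chart $\pi^{-1}(U_{\alpha})\cong U_{\alpha}\times F$ the bivector $P$ is the vertical lift of $P_{F}$, so transporting an invariant volume form $\nu$ of the unimodular fibre $F$ (constant in the base direction) gives a leaf-wise volume form $\tau_{\alpha}$ on $\pi^{-1}(U_{\alpha})$. By \eqref{eq:DefFolMod} the foliated modular vector field restricts on each leaf to the leaf-wise modular vector field, which here is $Z_{P_{F}}^{\nu}=0$; hence $Z_{P}^{\tau_{\alpha}}=0$. On overlaps we have $\tau_{\alpha}=f_{\alpha\beta}\tau_{\beta}$, and the transformation rule for the foliated modular vector field under rescaling of $\tau$, together with $Z_{P}^{\tau_{\alpha}}=Z_{P}^{\tau_{\beta}}=0$, forces $P^{\sharp}\operatorname{d}\ln|f_{\alpha\beta}|=0$; that is, each $a_{\alpha\beta}:=\ln|f_{\alpha\beta}|$ is a Casimir function of $P$. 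The local forms $\tau_{\alpha}$ therefore fail to glue, and assembling them into a single global $\tau$ with vanishing modular vector field is the main obstacle.

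I expect to resolve this with a partition of unity, the key observation being that $\{a_{\alpha\beta}\}$ is an additive \v{C}ech cocycle valued in the sheaf of Casimir functions of $P$, and that this sheaf is a module over functions pulled back from $S$: if $\rho\in C^{\infty}(S)$ and $c$ is Casimir, then $P^{\sharp}\operatorname{d}(\pi^{*}\rho)=0$ (a horizontal covector annihilates $\mathbb{V}$) gives $P^{\sharp}\operatorname{d}(\pi^{*}\rho\cdot c)=0$. Thus the sheaf is fine, the cocycle splits as $a_{\alpha\beta}=\ell_{\alpha}-\ell_{\beta}$ with each $\ell_{\alpha}$ a Casimir, and setting $\tau:=\mathrm{e}^{-\ell_{\alpha}}\tau_{\alpha}$ on $\pi^{-1}(U_{\alpha})$ defines a global leaf-wise volume form. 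Since rescaling by the exponential of a Casimir leaves the foliated modular vector field unchanged, $Z_{P}^{\tau}=Z_{P}^{\tau_{\alpha}}=0$, so $\operatorname{Mod}(M,\mathcal{V},P)=0$ and \eqref{eq:ModsAndReeb} yields $\operatorname{Mod}(M,P)=0$.
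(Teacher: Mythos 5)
Your argument is correct, and its skeleton is the same as the paper's: reduce via Proposition \ref{prop:ModsAndReeb} and Example \ref{ex:FiberReeb} to proving $\operatorname{Mod}(M,\mathcal{V},P)=0$, then assemble a global leaf-wise volume form with vanishing foliated modular vector field out of the local ones supplied by the trivializations and the unimodularity of $F$. (Your preliminary check that $\mathcal{V}$ is orientable and transversally orientable is a hypothesis verification the paper leaves implicit.) The two proofs differ only in how the gluing is done. The paper's terse ``partition of unity argument'' is additive: for a partition of unity $\{\rho_i\}$ on $S$ subordinate to the trivializing cover, one sets $\tau_0:=\sum_i(\pi^{*}\rho_i)\,\tau_i$, and then $\operatorname{d}_{\mathcal{V}}\mathbf{i}_{P}\tau_{0}=\sum_{i}\bigl(\operatorname{d}_{\mathcal{V}}(\pi^{*}\rho_{i})\wedge\mathbf{i}_{P}\tau_{i}+\pi^{*}\rho_{i}\,\operatorname{d}_{\mathcal{V}}\mathbf{i}_{P}\tau_{i}\bigr)=0$, since each $\pi^{*}\rho_{i}$ is fiberwise constant and $Z_{P}^{\tau_{i}}=0$; in other words, the condition $Z_{P}^{\tau}=0$ is linear in $\tau$ and stable under combinations with coefficients pulled back from $S$, so a convex combination of the local forms already works. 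You instead split the multiplicative \v{C}ech cocycle $f_{\alpha\beta}$: take logarithms to land in Casimirs, split the resulting additive cocycle with a pulled-back partition of unity, and rescale the local forms. This is a valid, if more roundabout, implementation resting on exactly the same two facts (pullbacks from $S$ are Casimirs of $P$; Casimir rescalings preserve $Z_{P}^{\tau}=0$). One point you should make explicit: the gluing $\tau=\mathrm{e}^{-\ell_{\alpha}}\tau_{\alpha}$ requires $f_{\alpha\beta}=\mathrm{e}^{a_{\alpha\beta}}$, i.e.\ $f_{\alpha\beta}>0$, which is not automatic from your definition $a_{\alpha\beta}=\ln|f_{\alpha\beta}|$; you must first fix a global orientation of $\mathbb{V}$ (it exists by the orientability argument you gave) and choose every $\tau_{\alpha}$ positively oriented, noting that replacing $\tau_{\alpha}$ by $-\tau_{\alpha}$ does not disturb $Z_{P}^{\tau_{\alpha}}=0$. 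This is precisely what the paper's phrase ``of positive orientation'' accomplishes, and the same positivity is needed in the additive version to keep the convex combination nowhere vanishing, so it is a shared fine point rather than a defect of your route.
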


\begin{proof}
Consider the regular foliation $\mathcal{V}:=\{\pi^{-1}(x)\}_{x\in S}$ on $M$ associated to the projection $\pi$. The orientability of the base implies $\operatorname{Mod}(\mathcal{V})=0$ (see, Example \ref{ex:FiberReeb}). Then, by \eqref{eq:ModsAndReeb}, it suffices to show that $\operatorname{Mod}(M,\mathcal{V},P)=0.$ Fix a nowhere vanishing top section $\tau\in\Gamma(\wedge^{\operatorname{top}}\mathbb{V}^{*})$, where $\mathbb{V}:=\ker d\pi$ is the vertical bundle, and a family of trivializations $M_{i}:=\pi^{-1}(U_{i})\cong U_{i}\times F$ over open sets $U_{i}$ which cover $S$. By the unimodularity hypothesis for $F$, one can equip each trivial Poisson bundle $(\pi_{i}:M_{i}\rightarrow U_{i},P_{i}:=P|_{M_{i}})$ with a leaf-wise volume form of positive orientation $\tau_{i}\in\Gamma(\wedge^{\operatorname{top}}\mathbb{V}^{*}|_{M_{i}})$ such that the corresponding modular vector field is zero, $Z_{P_{i}}^{\tau_{i}}$ $=0$. From here and the partition of unity argument, we conclude that there exists a global leaf-wise volume form $\tau_{0}$ of $\mathcal{V}$ such that $\operatorname{L}_{P^{\sharp}\operatorname{d}f}\tau_{0}=0$ for all $f\in C^{\infty}(M)$.
\end{proof}

In the regular case, as a consequence of Proposition \ref{prop:ModsAndReeb}, we recover the result due to \cite{We-97,AB-03} which says that the modular class of an orientable regular Poisson manifold is determined by the Reeb class of its symplectic foliation. Indeed, suppose that the Poisson manifold $(M,P)$ is regular with $\operatorname{rank}P=2s$. Let $\mathcal{V}=\mathcal{S}$ be the symplectic foliation of $P$ equipped with the leaf-wise symplectic form $\omega$. Then, the canonical leaf-wise volume form $\tau=\wedge^{s}\omega$ of the symplectic foliation is such that the modular vector field of the Poisson foliation $(M,\mathcal{S},P)$ is zero, $Z_{P}^{\tau}=0$. If, in addition, $M$ is orientable, then the symplectic foliation is transversally orientable. Therefore, in this case, formula \eqref{eq:ModsAndReeb} reads $\operatorname{Mod}(M,P)=-(P^{\sharp})^{*}\operatorname{Mod}(\mathcal{S})$.

\section{Modular Vector Fields of Coupling Poisson Structures}\label{sec:CoupMod}

Let $\mathcal{V}$ be a regular foliation of the smooth manifold $M$. Consider the tangent bundle $\mathbb{V}=T\mathcal{V}$ and its annihilator $\mathbb{V}^{0}\subset T^{*}M$.

Suppose we are given a $\mathcal{V}$-\emph{coupling Poisson structure} \cite{Vo-01,Va-04} $\Pi\in\Gamma(\wedge^{2}TM)$, that is, a Poisson bivector on $M$ such that
\begin{equation}\label{NB}
\mathbb{H}:=\Pi^{\sharp}(\mathbb{V}^{0})
\end{equation}
is a normal bundle of the foliation,
\begin{equation}\label{eq:SplitHV}
TM=\mathbb{H}\oplus\mathbb{V}.
\end{equation}
Then, the bigraded decomposition of $\Pi$ with respect to \eqref{eq:SplitHV} is of the form $\Pi=\Pi_{2,0}+\Pi_{0,2}$, where $\Pi_{H}:=\Pi_{2,0}\in\Gamma(\wedge^{2}\mathbb{H})$ is a bivector field of constant rank, with $\operatorname{rank}\Pi_{H}=\operatorname{rank}\mathbb{H}$, and $\Pi_{0,2}\in\Gamma(\wedge^{2}\mathbb{V})$ is a Poisson tensor on $M$ tangent to the foliation $\mathcal{V}$. Therefore, we can associate to the $\mathcal{V}$-coupling Poisson structure $\Pi$ the Poisson foliation $(M,\mathcal{V},P:=\Pi_{0,2})$. Notice that the characteristic distribution of $\Pi$ splits as $\Pi^{\sharp}(T^{*}M)=\mathbb{H}\oplus P^{\sharp}(\mathbb{V}^{*})$. Hence, $\operatorname{rank}\Pi=\operatorname{rank}\mathbb{H}+\operatorname{rank}P$, so the set of singular points of $\Pi$ and $P$ coincide.

Moreover, the restriction $\Pi_{H}^{\sharp}|_{\mathbb{V}^{0}}:\mathbb{V}^{0}\rightarrow\mathbb{H}$ is a vector bundle isomorphism and hence one can define an $\mathbb{H}$-nondegenerated 2-form $\sigma\in\Gamma(\wedge^{2}\mathbb{V}^{0})$, called the \emph{coupling form}, by
\begin{equation}\label{eq:CouplingForm}
\sigma^{\flat}|_{\mathbb{H}}:=-(\Pi_{H}^{\sharp}|_{\mathbb{V}^{0}})^{-1}:\mathbb{H}\rightarrow\mathbb{V}^{0}.
\end{equation}
Let $\gamma$ be the connection form on $(M,\mathcal{V})$ associated to the normal bundle $\mathbb{H}$ in \eqref{NB}. Then, the \emph{geometric data} $(\gamma,\sigma,P)$ associated to the coupling Poisson tensor $\Pi$ satisfy the following structure equations \cite{Vo-01,Va-04,Vo-05}
\begin{align}
[X,P]=0 & \qquad\forall X\in\Gamma_{\mathrm{pr}}(\mathbb{H}),\label{eq:SE0}\\
R^{\gamma}(X,Y)=-P^{\sharp}\operatorname{d}[\sigma(X,Y)] & \qquad\forall X,Y\in\Gamma_{\mathrm{pr}}(\mathbb{H}),\label{eq:SE1}\\
\operatorname{d}_{1,0}^{\gamma}\sigma=0 & . \label{eq:SE2}
\end{align}

In particular, the first equation means that $\gamma$ is a \emph{Poisson connection} on $(M,\mathcal{V},P)$. Moreover, by the $\mathbb{H}$-nondegeneracy property of the coupling form $\sigma$, the foliation $\mathcal{V}$ admits a canonical transversal volume element given by $l$ times the product of $\sigma$, $\sigma^{l} := \sigma\wedge\ldots\wedge\sigma \in \Gamma(\wedge^{\operatorname{top}}\mathbb{V}^{0})$, where $2l:=\operatorname{rank}\mathbb{V}^{0}=\operatorname{rank}\mathbb{H}$.

Now, assume that $\mathcal{V}$ is an orientable foliation. Then, one can associate to each leaf-wise volume form $\tau\in\Gamma(\wedge^{\operatorname{top}}\mathbb{V}^{*})$ of $\mathcal{V}$ a volume form $\Omega$ of $M$ by $\Omega:=\sigma^{l}\wedge\tau_{\gamma}$. Moreover, recall that $\tau$ gives rise to the divergence 1-form $\theta_{\tau}^{\gamma}\in\Gamma(\mathbb{V}^{0})$, defined by \eqref{eq:DefTheta}, and the modular vector field $Z_{P}^{\tau}$ of the Poisson foliation $(M,\mathcal{V},P)$, introduced in \eqref{eq:DefFolMod}. We describe the modular vector fields of coupling Poisson structures in terms of these objects.

\begin{proposition}\label{prop:CoupMod}
Let $\Pi$ be a coupling Poisson structure on the orientable foliated manifold $(M,\mathcal{V})$ associated to geometric data $(\gamma,\sigma,P)$. Fix a leaf-wise volume form $\tau$ of $\mathcal{V}$ and consider the volume form $\Omega:=\sigma^{l}\wedge\tau_{\gamma}$ of $M$. If $Z:=Z^{\Omega}_{\Pi}$ is the corresponding modular vector field, then the bigraded components of $Z$ relative to the splitting \eqref{eq:SplitHV} are given by
\begin{equation}\label{eq:CoupMod}
Z_{1,0}=-\Pi^{\sharp}(\theta_{\tau}^{\gamma}),\qquad Z_{0,1}=Z_{P}^{\tau}.
\end{equation}
\end{proposition}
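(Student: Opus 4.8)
The plan is to compute directly from the defining relation $-\mathbf{i}_{Z}\Omega=\operatorname{d}\mathbf{i}_{\Pi}\Omega$ and then read off the two bigraded components of $Z$ by comparing bidegrees on both sides. First I would decompose $\Pi=\Pi_{H}+P$ and contract against $\Omega=\sigma^{l}\wedge\tau_{\gamma}$. Since $\Pi_{H}\in\Gamma(\wedge^{2}\mathbb{H})$ annihilates the factor $\tau_{\gamma}\in\Gamma(\wedge^{\operatorname{top}}\mathbb{H}^{0})$ while $P\in\Gamma(\wedge^{2}\mathbb{V})$ annihilates $\sigma^{l}\in\Gamma(\wedge^{\operatorname{top}}\mathbb{V}^{0})$, the interior product splits cleanly as $\mathbf{i}_{\Pi}\Omega=(\mathbf{i}_{\Pi_{H}}\sigma^{l})\wedge\tau_{\gamma}+\sigma^{l}\wedge\mathbf{i}_{P}\tau_{\gamma}$, the two summands having bidegrees $(2l-2,k)$ and $(2l,k-2)$.

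Next I would apply $\operatorname{d}=\operatorname{d}_{1,0}^{\gamma}+\operatorname{d}_{2,-1}^{\gamma}+\operatorname{d}_{0,1}^{\gamma}$ to this sum and exploit that forms live in $\wedge^{p}\mathbb{V}^{0}\otimes\wedge^{q}\mathbb{H}^{0}$ with $p\le 2l$ and $q\le k$. This suppresses every term exceeding the top degree in either grading, leaving exactly a $(2l-1,k)$-component equal to $\operatorname{d}_{1,0}^{\gamma}[(\mathbf{i}_{\Pi_{H}}\sigma^{l})\wedge\tau_{\gamma}]$ and a $(2l,k-1)$-component equal to $\operatorname{d}_{2,-1}^{\gamma}[(\mathbf{i}_{\Pi_{H}}\sigma^{l})\wedge\tau_{\gamma}]+\operatorname{d}_{0,1}^{\gamma}[\sigma^{l}\wedge\mathbf{i}_{P}\tau_{\gamma}]$. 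Since $\mathbf{i}_{Z_{1,0}}\Omega$ and $\mathbf{i}_{Z_{0,1}}\Omega$ carry precisely these bidegrees, matching components reduces the proposition to two separate identities.

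For the horizontal component I would use the symplectic duality between $\sigma$ and $\Pi_{H}$ encoded in \eqref{eq:CouplingForm}. A short Lefschetz-type computation gives $\mathbf{i}_{\Pi_{H}}\sigma^{l}=-l\,\sigma^{l-1}$ and $\mathbf{i}_{\Pi_{H}^{\sharp}\theta_{\tau}^{\gamma}}\sigma=-\theta_{\tau}^{\gamma}$. Combining these with the structure equation $\operatorname{d}_{1,0}^{\gamma}\sigma=0$ from \eqref{eq:SE2} and the identity $\operatorname{d}_{1,0}^{\gamma}\tau_{\gamma}=\theta_{\tau}^{\gamma}\wedge\tau_{\gamma}$ from \eqref{eq:DefTheta}, the Leibniz rule collapses $\operatorname{d}_{1,0}^{\gamma}[(\mathbf{i}_{\Pi_{H}}\sigma^{l})\wedge\tau_{\gamma}]$ to $-l\,\theta_{\tau}^{\gamma}\wedge\sigma^{l-1}\wedge\tau_{\gamma}$, which is exactly $\mathbf{i}_{\Pi^{\sharp}\theta_{\tau}^{\gamma}}\Omega$ (note that $P^{\sharp}\theta_{\tau}^{\gamma}=0$, so $\Pi^{\sharp}\theta_{\tau}^{\gamma}=\Pi_{H}^{\sharp}\theta_{\tau}^{\gamma}$). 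This yields $Z_{1,0}=-\Pi^{\sharp}\theta_{\tau}^{\gamma}$.

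For the vertical component I would first isolate, inside $\operatorname{d}_{0,1}^{\gamma}[\sigma^{l}\wedge\mathbf{i}_{P}\tau_{\gamma}]$, the summand $\sigma^{l}\wedge\operatorname{d}_{0,1}^{\gamma}\mathbf{i}_{P}\tau_{\gamma}$, which by \eqref{eq:FolModConnect} equals $-\sigma^{l}\wedge\mathbf{i}_{Z_{P}^{\tau}}\tau_{\gamma}=-\mathbf{i}_{Z_{P}^{\tau}}\Omega$, producing the desired $Z_{0,1}=Z_{P}^{\tau}$. The remaining task, and the genuine heart of the argument, is to show that the leftover curvature term $\operatorname{d}_{2,-1}^{\gamma}[(\mathbf{i}_{\Pi_{H}}\sigma^{l})\wedge\tau_{\gamma}]=-\mathbf{i}_{R^{\gamma}}(-l\,\sigma^{l-1}\wedge\tau_{\gamma})$ cancels the leftover leafwise-variation term $(\operatorname{d}_{0,1}^{\gamma}\sigma^{l})\wedge\mathbf{i}_{P}\tau_{\gamma}$. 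Here I would evaluate in a local projectable horizontal frame, in which the structure equation \eqref{eq:SE1} identifies the vector part of $R^{\gamma}$ with $-P^{\sharp}\operatorname{d}_{0,1}^{\gamma}\sigma$ and the projectability suppresses the connection contributions to $\operatorname{d}_{0,1}^{\gamma}\sigma$, and then invoke the algebraic identity $\mathbf{i}_{P^{\sharp}\alpha}\tau_{\gamma}=\alpha\wedge\mathbf{i}_{P}\tau_{\gamma}$ for $\alpha\in\Gamma(\mathbb{H}^{0})$, itself a consequence of \eqref{eq:Formula} together with the top-degree vanishing $\alpha\wedge\tau_{\gamma}=0$. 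I expect this cancellation in the $(2l,k-1)$-component to be the main technical obstacle, as it requires the projectable frame to kill the spurious connection terms and careful sign bookkeeping for the insertion of the vector-valued form $R^{\gamma}$.
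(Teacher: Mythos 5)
Your proposal is correct, and its skeleton is the same as the paper's: both start from $-\mathbf{i}_{Z}\Omega=\operatorname{d}\mathbf{i}_{\Pi}\Omega$, split $\Pi=\Pi_{H}+P$ and $\operatorname{d}=\operatorname{d}_{1,0}^{\gamma}+\operatorname{d}_{0,1}^{\gamma}+\operatorname{d}_{2,-1}^{\gamma}$, use $\mathbf{i}_{\Pi_{H}}\sigma^{l}=-l\sigma^{l-1}$ together with \eqref{eq:SE2} and \eqref{eq:DefTheta} to produce the horizontal component $-\Pi^{\sharp}\theta_{\tau}^{\gamma}$, and use \eqref{eq:FolModConnect} for the vertical one, so that everything reduces to cancelling the curvature term against $(\operatorname{d}_{0,1}^{\gamma}\sigma^{l})\wedge\mathbf{i}_{P}\tau_{\gamma}$. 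The only genuine divergence is how that cancellation is established. The paper does it globally: it writes $\operatorname{d}_{0,1}^{\gamma}\sigma^{l}=\Lambda\wedge\sigma^{l}$ for a $1$-form $\Lambda\in\Gamma(\mathbb{H}^{0})$ (using that $\sigma^{l}$ is a transversal volume element), proves $\Lambda=-\mathbf{i}_{\Pi_{H}}\operatorname{d}\sigma$ by contracting with $\Pi_{H}^{l}$ and dividing by $\mathbf{i}_{\Pi_{H}^{l}}\sigma^{l}\neq0$, and then combines this with the contracted form $\mathbf{i}_{\Pi_{H}}R^{\gamma}=-P^{\sharp}\mathbf{i}_{\Pi_{H}}\operatorname{d}\sigma$ of \eqref{eq:SE1} to obtain the key identity $\mathbf{i}_{\Pi_{H}}R^{\gamma}=P^{\sharp}\Lambda$; it also packages the curvature contribution once and for all as $\operatorname{d}_{2,-1}^{\gamma}\mathbf{i}_{\Pi_{H}}\Omega=-\mathbf{i}_{\mathbf{i}_{\Pi_{H}}R^{\gamma}}\Omega$, so the cancellation takes place inside the argument of the insertion operator. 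You instead evaluate both leftover terms in a local projectable horizontal frame and cancel them pointwise via $\mathbf{i}_{P^{\sharp}\alpha}\tau_{\gamma}=\alpha\wedge\mathbf{i}_{P}\tau_{\gamma}$. Your route is viable: projectable frames exist locally, \eqref{eq:SE1} applies to them verbatim, and the fact that $P^{\sharp}$ annihilates $\Gamma(\mathbb{V}^{0})$ lets you replace $\operatorname{d}[\sigma(X,Y)]$ by its $(0,1)$-part, exactly as you say. The price is that you must fix conventions for inserting the vector-valued form $R^{\gamma}$ and track shuffle signs by hand, which is precisely where you flag the risk. The paper's contraction trick buys a coordinate-free verification in which those signs never surface; your version is more elementary and makes explicit where projectability is actually used, something the paper leaves implicit in passing from \eqref{eq:SE1} to its contracted form.
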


\begin{proof}
By the definition of the modular vector field $Z$ and using the bigraded decompositions of $\operatorname{d}$ and $\Pi$, we have
\begin{align}\label{Re}
-\mathbf{i}_{Z}\Omega &= \operatorname{d}\mathbf{i}_{\Pi}\Omega = \operatorname{d}_{1,0}^{\gamma}\mathbf{i}_{\Pi_{H}}\Omega + \operatorname{d}_{0,1}^{\gamma}\mathbf{i}_{P}\Omega + \operatorname{d}_{2,-1}^{\gamma}\mathbf{i}_{\Pi_{H}}\Omega\nonumber\\
&= \operatorname{d}_{1,0}^{\gamma}\mathbf{i}_{\Pi_{H}}\sigma^{l}\wedge\tau_{\gamma} + \mathbf{i}_{\Pi_{H}}\sigma^{l}\wedge \operatorname{d}_{1,0}^{\gamma}\tau_{\gamma} + \operatorname{d}_{0,1}^{\gamma}\sigma^{l}\wedge\mathbf{i}_{P}\tau_{\gamma}\nonumber\\
&\hspace{7.5em} + \sigma^{l}\wedge\operatorname{d}_{0,1}^{\gamma}\mathbf{i}_{P}\tau_{\gamma} - \mathbf{i}_{\mathbf{i}_{\Pi_{H}}R^{\gamma}}\Omega.
\end{align}
It follows from \eqref{eq:CouplingForm} that $\mathbf{i}_{\Pi_{H}}\sigma^{l}=-l\sigma^{l-1}$. This together with \eqref{eq:SE2} implies $\operatorname{d}_{1,0}^{\gamma}\mathbf{i}_{\Pi_{H}}\sigma^{l}=0$. On the other hand, there exists a 1-form $\Lambda\in\Gamma(\mathbb{H}^{0})$ satisfying the relation $\operatorname{d}_{0,1}^{\gamma}\sigma^{l}=\Lambda\wedge\sigma^{l}$. Then, from \eqref{Re}, by using \eqref{eq:FolModConnect}, \eqref{eq:DefTheta} and \eqref{eq:Formula}, we get
\begin{align*}
-\mathbf{i}_{Z}\Omega &= \theta_{\tau}^{\gamma}\wedge\mathbf{i}_{\Pi_{H}}\sigma^{l}\wedge\tau_{\gamma} + \Lambda\wedge\sigma^{l}\wedge\mathbf{i}_{P}\tau_{\gamma} - \sigma^{l}\wedge\mathbf{i}_{Z_{P}^{\tau}}\tau_{\gamma} - \mathbf{i}_{\mathbf{i}_{\Pi_{H}}R^{\gamma}}\Omega\\
&= \mathbf{i}_{\Pi_{H}^{\sharp}(\theta_{\tau}^{\gamma})}\Omega + \mathbf{i}_{P^{\sharp}\Lambda}\Omega - \mathbf{i}_{Z_{P}^{\tau}}\Omega - \mathbf{i}_{\mathbf{i}_{\Pi_{H}}R^{\gamma}}\Omega.
\end{align*}
It is left to show
\begin{equation}\label{SH}
\mathbf{i}_{\Pi_{H}}R^{\gamma}=P^{\sharp}\Lambda.
\end{equation}
Consider the $2l$-vector field given by $l$ times the product of $\Pi_{H}$, $\Pi_{H}^{l} :=\Pi_{H} \wedge\ldots\wedge\Pi_{H}$. Using again identities \eqref{eq:Formula}, \eqref{eq:SE2} and the bigrading argument, we evaluate
\begin{align*}
(\mathbf{i}_{\Pi_{H}^{l}}\sigma^{l})\Lambda &= \mathbf{i}_{\Pi_{H}^{l}}(\Lambda\wedge\sigma^{l}) = \mathbf{i}_{\Pi_{H}^{l}}(\operatorname{d}\sigma^{l}) = \mathbf{i}_{\Pi_{H}^{l}}(l\operatorname{d}\sigma\wedge\sigma^{l-1})\\
&= -\mathbf{i}_{\Pi_{H}^{l}}(\operatorname{d}\sigma\wedge\mathbf{i}_{\Pi_{H}}\sigma^{l}) = -(\mathbf{i}_{\Pi_{H}^{l}}\sigma^{l})\mathbf{i}_{\Pi_{H}}\operatorname{d}\sigma.
\end{align*}
From here and taking into account that $\mathbf{i}_{\Pi_{H}^{l}}\sigma^{l}\neq0$, we conclude $\Lambda=-\mathbf{i}_{\Pi_{H}}\operatorname{d}\sigma$. On the other hand, the curvature identity \eqref{eq:SE1} implies the equality $\mathbf{i}_{\Pi_{H}}R^{\gamma} = -P^{\sharp}\mathbf{i}_{\Pi_{H}}\operatorname{d}\sigma$ which together with the above representation for $\Lambda$ proves \eqref{SH}.
\end{proof}

As mentioned above, the set of singular points of the coupling Poisson structure $\Pi$ coincides with the set of singular points of its leaf-tangent part $P=\Pi_{0,2}$. From the relations \eqref{eq:CoupMod}, we derive the following information on the behavior of the modular vector fields of $\Pi$ at the singular points.

\begin{corollary}
A modular vector field of the Poisson manifold $(M,\Pi)$ is tangent to the symplectic foliation of $\Pi$ at a point $x\in M$ if and only if a modular vector field $Z_{P}^{\tau}\in\Gamma(\mathbb{V})$ of the Poisson foliation $(M,\mathcal{V},P)$ is tangent to the symplectic foliation of $P$ at $x$. In particular, this is true if $x$ is a regular point of $P$.
\end{corollary}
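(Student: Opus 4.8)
The plan is to read the result directly off the bigraded decomposition of Proposition \ref{prop:CoupMod}, after first observing that the tangency property in question does not depend on any of the auxiliary choices. Indeed, any two modular vector fields of $(M,\Pi)$ differ by a Hamiltonian vector field $\Pi^{\sharp}\operatorname{d}g$, which is automatically tangent to the symplectic foliation of $\Pi$; likewise, any two foliated modular vector fields $Z_{P}^{\tau}$ differ by $P^{\sharp}\operatorname{d}f$, tangent to the symplectic foliation of $P$. Hence the two tangency conditions are independent of $\Omega$ and of $\tau$, respectively, and it suffices to verify the equivalence for the matched pair $Z=Z_{\Pi}^{\Omega}$ with $\Omega=\sigma^{l}\wedge\tau_{\gamma}$ and the associated $Z_{P}^{\tau}$, which are related by \eqref{eq:CoupMod}.

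For this pair I would combine \eqref{eq:CoupMod} with the splitting of the characteristic distribution $\Pi^{\sharp}(T^{*}M)=\mathbb{H}\oplus P^{\sharp}(\mathbb{V}^{*})$. Since $\theta_{\tau}^{\gamma}\in\Gamma(\mathbb{V}^{0})$ and $\mathbb{H}=\Pi^{\sharp}(\mathbb{V}^{0})$, the component $Z_{1,0}=-\Pi^{\sharp}(\theta_{\tau}^{\gamma})$ lies in $\Gamma(\mathbb{H})$, while $Z_{0,1}=Z_{P}^{\tau}\in\Gamma(\mathbb{V})$. Now $\mathbb{H}_{x}$ is contained in the symplectic foliation of $\Pi$ at $x$, the decomposition $T_{x}M=\mathbb{H}_{x}\oplus\mathbb{V}_{x}$ is direct, and $P^{\sharp}(\mathbb{V}^{*}_{x})\subset\mathbb{V}_{x}$. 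Projecting $\mathbb{H}_{x}\oplus P^{\sharp}(\mathbb{V}^{*}_{x})$ onto $\mathbb{V}_{x}$ along $\mathbb{H}_{x}$ therefore shows that $Z_{x}=Z_{1,0}(x)+Z_{0,1}(x)$ lies in $\mathbb{H}_{x}\oplus P^{\sharp}(\mathbb{V}^{*}_{x})$ if and only if its $\mathbb{V}$-part $Z_{0,1}(x)=Z_{P}^{\tau}(x)$ lies in $P^{\sharp}(\mathbb{V}^{*}_{x})$. This is precisely the asserted equivalence, and it is essentially immediate once Proposition \ref{prop:CoupMod} is in hand.

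It remains to treat the last assertion, the only step requiring a genuine (though standard) argument. At a point $x$ that is regular for $P$, restricting to the leaf $L$ of $\mathcal{V}$ through $x$ identifies $Z_{P}^{\tau}(x)$ with the value at $x$ of the modular vector field $Z_{P_{L}}^{\tau_{L}}$ of the Poisson manifold $(L,P_{L})$ (using $Z_{P}^{\tau}|_{L}=Z_{P_{L}}^{\tau_{L}}$), and it identifies $P^{\sharp}(\mathbb{V}^{*}_{x})=P_{L}^{\sharp}(T^{*}_{x}L)$ with the corresponding symplectic leaf. Since $P_{L}$ has locally constant rank near $x$, one may choose a local leaf-wise volume adapted to the regular symplectic foliation of $P_{L}$ (its leaf-wise Liouville volume) for which the modular vector field vanishes near $x$; by the $\tau$-independence of tangency noted above, $Z_{P}^{\tau}(x)$ is then tangent to the symplectic foliation of $P$ at $x$. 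The equivalence of the first part now yields tangency of the modular vector field of $\Pi$ at every regular point of $P$. The main obstacle is thus localized entirely in this regular-point claim---equivalently, in the standard fact that a modular vector field is tangent to the symplectic leaves over the regular locus---while the equivalence itself is a formal consequence of the bigraded formula \eqref{eq:CoupMod} and the splitting of the characteristic distribution.
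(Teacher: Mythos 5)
Your proposal is correct and follows exactly the route the paper intends: the corollary is stated there as an immediate consequence of Proposition \ref{prop:CoupMod}, read off from the bigraded formula \eqref{eq:CoupMod} together with the splitting $\Pi^{\sharp}(T^{*}M)=\mathbb{H}\oplus P^{\sharp}(\mathbb{V}^{*})$, which is precisely your argument. You merely make explicit the details the paper leaves implicit (independence of the choices of $\Omega$ and $\tau$, and the standard regular-point fact via restriction to a leaf of $\mathcal{V}$), and these are handled correctly.
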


\begin{remark}
More generally, for a Poisson submanifold $N$ of a Poisson manifold $(M,\Pi)$, one can introduce the notion of a relative modular class of $N$ \cite{CaFe-13}. If this class vanishes, then the modular vector field of $(M,\Pi)$ is tangent to $N$. In particular, this criterion can be applied when $N$ is a symplectic leaf.
\end{remark}

Notice that the 1-form $\Lambda\in\Gamma(\mathbb{H}^{0})$, arising in \eqref{SH}, just coincides with the 1-form $\lambda_{\varsigma}^{\gamma}$ defined by \eqref{R3} for $\varsigma=\sigma^{l}$, whose $\operatorname{d}_{0,1}^{\gamma}$-cohomology class gives the Reeb class of the foliation $\mathcal{V}$. Moreover, by the curvature relation \eqref{SH} and Proposition \ref{prop:ModsAndReeb}, we conclude that if $\gamma$ is flat, $R^{\gamma}=0$, then
\begin{equation}\label{eq:ModsFlat}
\operatorname{Mod}(M,P)=\operatorname{Mod}(M,\mathcal{V},P).
\end{equation}

Now, let us consider the Lie algebra $\operatorname{Poiss}_{\mathcal{V}}(M,P)$ of all $\mathcal{V}$-tangent Poisson vector fields of $P$. Then, the projection $\gamma:TM\rightarrow\mathbb{V}$ along $\mathbb{H}$ in decomposition \eqref{eq:SplitHV} induces the linear mapping \cite{VeVo-16}
\begin{equation}\label{eq:CohomProy}
\gamma^{*}:H_{\Pi}^{1}(M)\rightarrow\frac{\operatorname{Poiss}_{\mathcal{V}}(M,P)}{\operatorname{Ham}(M,P)}\subset H_{P}^{1}(M)
\end{equation}
from the first Poisson cohomology of $(M,\Pi)$ to the first cohomology of the Poisson foliation $(M,\mathcal{V},P)$. As a consequence of Proposition \ref{prop:CoupMod}, this map is natural with respect to the modular classes.

\begin{corollary}\label{cor:ProyMod}
The quotient map \eqref{eq:CohomProy} takes the modular class of the Poisson manifold $(M,\Pi)$ to the modular class of the Poisson foliation $(M,\mathcal{V},P)$, $\gamma^{*}(\operatorname{Mod}(M,\Pi)) = \operatorname{Mod}(M,\mathcal{V},P)$.
\end{corollary}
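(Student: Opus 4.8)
The plan is to evaluate both sides on explicit representatives and then quote the bigraded formula from Proposition \ref{prop:CoupMod}. First I would fix a leaf-wise volume form $\tau$ of $\mathcal{V}$ together with the associated volume form $\Omega := \sigma^{l}\wedge\tau_{\gamma}$ on $M$, so that the modular class of the coupling Poisson manifold is represented by the modular vector field $Z := Z_{\Pi}^{\Omega}$; that is, $\operatorname{Mod}(M,\Pi)=[Z]$ with $Z\in\operatorname{Poiss}(M,\Pi)$. In parallel, the foliated modular class $\operatorname{Mod}(M,\mathcal{V},P)$ is by definition represented by $Z_{P}^{\tau}$.

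Next I would unwind the definition of the quotient map $\gamma^{*}$ in \eqref{eq:CohomProy}. Since the connection form $\gamma$ is exactly the projection $\operatorname{pr}_{\mathbb{V}}:TM\to\mathbb{V}$ along $\mathbb{H}$ in the splitting \eqref{eq:SplitHV}, its action on a vector field $X$ simply extracts the bidegree $(0,1)$ component, $\gamma(X)=X_{0,1}\in\Gamma(\mathbb{V})$. Hence, on the level of representatives, $\gamma^{*}[X]=[X_{0,1}]$, and in particular $\gamma^{*}(\operatorname{Mod}(M,\Pi))=\gamma^{*}[Z]=[Z_{0,1}]$ in the first cohomology of the Poisson foliation.

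The concluding step is the identification of $Z_{0,1}$, for which I would invoke Proposition \ref{prop:CoupMod} directly: the second formula in \eqref{eq:CoupMod} gives $Z_{0,1}=Z_{P}^{\tau}$ for the modular vector field of $\Pi$ relative to precisely the volume form $\Omega=\sigma^{l}\wedge\tau_{\gamma}$ chosen above. Combining the two preceding displays then yields $\gamma^{*}(\operatorname{Mod}(M,\Pi))=[Z_{P}^{\tau}]=\operatorname{Mod}(M,\mathcal{V},P)$, which is the assertion.

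I do not expect a substantive obstacle here, as the statement is an immediate consequence of the bigraded decomposition already established. The one point meriting care is the well-definedness of $\gamma^{*}$ on cohomology, namely that the $(0,1)$-component of a $\Pi$-Poisson vector field is a $\mathcal{V}$-tangent $P$-Poisson vector field and that $\Pi$-Hamiltonian fields project to $P$-Hamiltonian fields. However, this is exactly the content of the map constructed in \cite{VeVo-16} and may be taken as given; the present corollary only records where the modular class is sent under it.
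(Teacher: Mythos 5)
Your proposal is correct and follows exactly the route the paper intends: the paper presents this corollary as an immediate consequence of Proposition \ref{prop:CoupMod}, i.e.\ one represents $\operatorname{Mod}(M,\Pi)$ by $Z_{\Pi}^{\Omega}$ with $\Omega=\sigma^{l}\wedge\tau_{\gamma}$, observes that $\gamma^{*}$ acts on representatives by taking the $(0,1)$-component, and reads off $Z_{0,1}=Z_{P}^{\tau}$ from \eqref{eq:CoupMod}. Your remark on the well-definedness of $\gamma^{*}$, deferred to \cite{VeVo-16}, matches the paper's treatment as well.
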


\section{Gauge Transformations}\label{sec:gauge}

As we already mentioned above, according to \cite{We-97,AB-03}, the unimodularity of an orientable regular Poisson manifold $(M,\Pi)$ is equivalent to the triviality of the Reeb class of the characteristic (symplectic) foliation $\mathcal{S}$ of $\Pi$. In other words, this means that the unimodularity property is independent of the leaf-wise symplectic structure on $\mathcal{S}$ in the following sense: if $\widetilde{\Pi}$ is another regular Poisson structure on $M$ which has the same characteristic foliation $\mathcal{S}$, then the unimodularity of $\Pi$ implies $\operatorname{Mod}(M,\widetilde{\Pi})=0$. But this fact is no longer true in the singular case.

For example, let us consider on $\mathbb{R}^{3}$, with coordinate functions $(x_{1},x_{2},x_{3})$, the linear Poisson structure $\Pi=\tfrac{1}{2}\epsilon_{ijk}x_{i}\frac{\partial}{\partial x_{j}}\wedge\frac{\partial}{\partial x_{k}}$ associated to the Lie algebra $\mathfrak{so}(3)$. Here $\epsilon_{ijk}$ are the \emph{Levi-Civita symbols}. We are using the Einstein summation convention. Consider also the homogeneous Poisson structure $\widetilde{\Pi} = f\Pi$, where $f(x_{1},x_{2},x_{3}):=x_{1}^{4}+x_{2}^{4}+x_{3}^{4}$. It is clear that the characteristic foliations of these structures coincide. Computing the corresponding modular vector fields with respect to the Euclidean volume form $\Omega$ in $\mathbb{R}^{3}$, we get $Z_{\Pi}^{\Omega}\equiv0$ and
\[
Z_{\widetilde{\Pi}}^{\Omega} = 2\epsilon_{ijk}\left(x_{i}^{3}x_{j}-x_{i}x_{j}^{3}\right)\frac{\partial}{\partial x_{k}}\neq0.
\]
This shows that $\Pi$ is unimodular, while $\widetilde{\Pi}$ is not, even though they have the same characteristic foliation.

On the other hand, there exists an equivalence relation for (possibly singular) Poisson structures, called the \emph{gauge equivalence} \cite{SeWe-01}, which preserves the unimodularity property.

Let $(M,\Pi)$ be a Poisson manifold. Suppose we are given a closed 2-form $B$ on $M$ such that the endomorphism
\begin{equation}\label{eq:Gau1}
(\operatorname{Id}-B^{\flat}\circ\Pi^{\sharp}):T^{*}M\rightarrow T^{*}M
\end{equation}
is invertible. Then, there exists a Poisson bivector field $\widetilde{\Pi}$ on $M$ defined by the relation $\widetilde{\Pi}^{\sharp}=\Pi^{\sharp}\circ(\operatorname{Id}-B^{\flat}\circ\Pi^{\sharp})^{-1}$ and represents the result of $\Pi$ under the gauge transformation induced by $B$ \cite{SeWe-01,Br-05}. In this case, we say that $\widetilde{\Pi}$ is \emph{gauge equivalent} to $\Pi$. The gauge transformation modifies only the leaf-wise symplectic form of $\Pi$ by means of the pull-back of the closed 2-form $B$, preserving the characteristic foliation. Furthermore, gauge transformations preserve the unimodularity property.

\begin{proposition}\label{prop:Gauge}
If $\Pi$ and $\widetilde{\Pi}$ are gauge equivalent Poisson structures on $M$, then
\begin{equation}\label{eq:ModGauge}
\operatorname{Mod}(M,\Pi)=0~~\Longleftrightarrow~~\operatorname{Mod}(M,\widetilde{\Pi})=0.
\end{equation}
\end{proposition}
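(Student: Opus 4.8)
The plan is to reduce the equivalence \eqref{eq:ModGauge} to a comparison of modular vector fields computed with respect to a single, fixed volume form $\Omega$ on $M$. Since the modular class is independent of the choice of volume form, it suffices to show that the modular vector fields $Z_{\Pi}^{\Omega}$ and $Z_{\widetilde{\Pi}}^{\Omega}$ differ by a Hamiltonian vector field of $\widetilde{\Pi}$ (equivalently, of $\Pi$, since the two structures share the same Hamiltonian vector fields up to the gauge-twisted anchor). The central structural fact I would exploit is that a gauge transformation induced by a closed $2$-form $B$ preserves the characteristic foliation and modifies only the leaf-wise symplectic form; in particular, $\widetilde{\Pi}^{\sharp}$ and $\Pi^{\sharp}$ have the same image, so $\operatorname{Ham}(M,\Pi)$ and $\operatorname{Ham}(M,\widetilde{\Pi})$ span the same distribution at every point.

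First I would recall the defining relation $-\mathbf{i}_{Z_{\Pi}^{\Omega}}\Omega=\operatorname{d}\mathbf{i}_{\Pi}\Omega$ and write down the analogous relation for $\widetilde{\Pi}$. The natural strategy is to express $\mathbf{i}_{\widetilde{\Pi}}\Omega$ in terms of $\mathbf{i}_{\Pi}\Omega$ using $\widetilde{\Pi}^{\sharp}=\Pi^{\sharp}\circ(\operatorname{Id}-B^{\flat}\circ\Pi^{\sharp})^{-1}$, and then apply $\operatorname{d}$. A cleaner route, however, is to work leaf-wise: on each symplectic leaf $L$, with leaf-wise symplectic forms $\omega_{L}$ and $\widetilde{\omega}_{L}=\omega_{L}+\iota_{L}^{*}B$ related by the gauge transformation, the restriction of a modular vector field is the modular vector field of the leaf, as recorded in the discussion preceding Proposition \ref{prop:ModsAndReeb}. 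I would then invoke the known fact that for a symplectic manifold the modular class vanishes for any symplectic form, so the two leaf-wise modular vector fields differ by an exact term; the key is to show this exact term assembles globally into $\widetilde{\Pi}^{\sharp}\operatorname{d}g$ for a single function $g$ determined by $\Omega$, $B$, and the Pfaffian/Liouville factors of the two symplectic structures.

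The technically delicate point, which I expect to be the main obstacle, is controlling the comparison across leaves in the singular setting, where the rank of $\Pi$ jumps and the individual leaf-wise arguments do not glue automatically. The cleanest way to handle this is to avoid leaf-by-leaf reasoning altogether and instead compute $\operatorname{d}\mathbf{i}_{\widetilde{\Pi}}\Omega-\operatorname{d}\mathbf{i}_{\Pi}\Omega$ as a global identity of differential forms. Writing $\Phi:=(\operatorname{Id}-B^{\flat}\circ\Pi^{\sharp})^{-1}$, I would express $\widetilde{\Pi}$ in terms of $\Pi$ and $B$, expand $\mathbf{i}_{\widetilde{\Pi}}\Omega$, and use the closedness $\operatorname{d}B=0$ to show that the difference $\operatorname{d}\mathbf{i}_{\widetilde{\Pi}}\Omega-\operatorname{d}\mathbf{i}_{\Pi}\Omega$ equals $-\mathbf{i}_{\widetilde{\Pi}^{\sharp}\operatorname{d}g}\Omega$ for an explicit $g$ built from the determinant of the endomorphism \eqref{eq:Gau1}. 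This yields
\[
Z_{\widetilde{\Pi}}^{\Omega}=Z_{\Pi}^{\Omega}+\widetilde{\Pi}^{\sharp}\operatorname{d}g,
\]
so that $Z_{\widetilde{\Pi}}^{\Omega}$ is Hamiltonian for $\widetilde{\Pi}$ exactly when $Z_{\Pi}^{\Omega}$ is Hamiltonian for $\Pi$ (using that both structures have the same Hamiltonian distribution). Since unimodularity means the modular vector field is Hamiltonian for a suitable choice of $\Omega$, this establishes \eqref{eq:ModGauge}. I would close by noting that symmetry of the gauge relation (gauge equivalence is an equivalence relation, with $\Pi$ obtained from $\widetilde{\Pi}$ by the transformation induced by $-B$) gives the reverse implication for free, so only one direction of the displayed computation needs to be carried out in detail.
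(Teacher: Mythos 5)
The central identity your argument rests on, $Z_{\widetilde{\Pi}}^{\Omega}=Z_{\Pi}^{\Omega}+\widetilde{\Pi}^{\sharp}\operatorname{d}g$, is false in general, and this is a fatal gap rather than a technicality. Concretely, on $M=\{(x,y)\in\mathbb{R}^{2}\mid 1+xy>0\}$ with $\Omega=\operatorname{d}x\wedge\operatorname{d}y$, take $\Pi=x\,\tfrac{\partial}{\partial x}\wedge\tfrac{\partial}{\partial y}$ and the closed $2$-form $B=y\,\operatorname{d}x\wedge\operatorname{d}y$. Then $B^{\flat}\circ\Pi^{\sharp}=-xy\operatorname{Id}$, the endomorphism \eqref{eq:Gau1} equals $(1+xy)\operatorname{Id}$, and $\widetilde{\Pi}=\tfrac{x}{1+xy}\,\tfrac{\partial}{\partial x}\wedge\tfrac{\partial}{\partial y}$. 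A direct computation gives $Z_{\Pi}^{\Omega}=-\tfrac{\partial}{\partial y}$ and $Z_{\widetilde{\Pi}}^{\Omega}=-(1+xy)^{-2}\bigl(x^{2}\tfrac{\partial}{\partial x}+\tfrac{\partial}{\partial y}\bigr)$. If their difference were $\widetilde{\Pi}^{\sharp}\operatorname{d}g$, then away from $\{x=0\}$ one would need $\tfrac{\partial g}{\partial y}=\tfrac{x}{1+xy}$ and $\tfrac{\partial g}{\partial x}=\tfrac{y(2+xy)}{1+xy}$, whose cross-derivatives are $\tfrac{1}{(1+xy)^{2}}$ and $\tfrac{2+2xy+x^{2}y^{2}}{(1+xy)^{2}}$; these never agree, so no such $g$ exists even locally. (The same check shows the difference is not $\Pi$-Hamiltonian either.) A second, related error is the justification ``both structures have the same Hamiltonian distribution'': the images $\Pi^{\sharp}(T^{*}M)=\widetilde{\Pi}^{\sharp}(T^{*}M)$ do coincide pointwise, but the spaces of vector fields $\operatorname{Ham}(M,\Pi)$ and $\operatorname{Ham}(M,\widetilde{\Pi})$ do not coincide, and it is the latter identification that your final step would require.

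What is true is a twisted version of your formula, in which the modular field of $\widetilde{\Pi}$ is corrected by the transpose of the gauge map: writing $\psi:=\operatorname{Id}-B^{\flat}\circ\Pi^{\sharp}$ and $\psi^{\vee}:=\operatorname{Id}-\Pi^{\sharp}\circ B^{\flat}:TM\rightarrow TM$ for its transpose, one has $\psi^{\vee}Z_{\widetilde{\Pi}}^{\Omega}-Z_{\Pi}^{\Omega}\in\operatorname{Ham}(M,\Pi)$; in the example above, $\psi^{\vee}Z_{\widetilde{\Pi}}^{\Omega}-Z_{\Pi}^{\Omega}=\Pi^{\sharp}\operatorname{d}\ln(1+xy)$, and $\ln(1+xy)=\tfrac{1}{2}\ln\det\psi$, consistent with your guess about the determinant. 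Since $\psi^{\vee}\circ\widetilde{\Pi}^{\sharp}=\Pi^{\sharp}$, the map $\psi^{\vee}$ carries $\operatorname{Ham}(M,\widetilde{\Pi})$ bijectively onto $\operatorname{Ham}(M,\Pi)$ (preserving the Hamiltonian functions), and with the twisted identity in hand the equivalence \eqref{eq:ModGauge} does follow. This corrected identity is exactly the cochain-level shadow of the paper's proof, which avoids all computation: by \v{S}evera--Weinstein, $\psi$ is an isomorphism between the cotangent Lie algebroids of $\Pi$ and $\widetilde{\Pi}$; by Evens--Lu--Weinstein, $\operatorname{Mod}(M,\Pi)$ is one-half the modular class of that Lie algebroid; and isomorphic Lie algebroids have modular classes corresponding under the induced isomorphism in cohomology, so one vanishes if and only if the other does. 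To salvage your approach you would have to insert $\psi^{\vee}$ into your identity and verify it directly, which amounts to reproving this algebroid statement by hand.
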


\begin{proof}
The modular class $\operatorname{Mod}(M,\Pi)$ of the orientable Poisson manifold $(M,\Pi)$ is one-half the modular class of the cotangent bundle $T^{*}M$ of $M$ with the Lie algebroid structure defined by $\Pi$ \cite{ELW}. As is known \cite{SeWe-01}, the map \eqref{eq:Gau1} induced by the gauge transformation is an isomorphism between the cotangent Lie algebroids associated to $\Pi$ and $\widetilde{\Pi}$. This proves the statement.
\end{proof}

\section{Unimodularity Criteria}\label{sec:Unimod}

Assume that on the orientable foliated manifold $(M,\mathcal{V})$, we are given a $\mathcal{V}$-coupling Poisson structure $\Pi$ associated to geometric data $(\gamma,\sigma,P)$. Our point is to formulate some conditions for the unimodularity of $\Pi$ in terms of the geometric data.

The following fact is a direct consequence of Corollary \ref{cor:ProyMod}.

\begin{lemma}\label{lemma:Unimods}
The unimodularity of the coupling Poisson structure $\Pi$ implies the unimodularity of the Poisson foliation $(M,\mathcal{V},P)$.
\end{lemma}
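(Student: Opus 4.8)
The plan is to obtain the statement as an immediate formal consequence of Corollary \ref{cor:ProyMod}. First I would recall the definitions involved: the coupling Poisson structure $\Pi$ is \emph{unimodular} precisely when its modular class vanishes, $\operatorname{Mod}(M,\Pi)=0$ in $H_{\Pi}^{1}(M)$; likewise, the Poisson foliation $(M,\mathcal{V},P)$ is unimodular exactly when $\operatorname{Mod}(M,\mathcal{V},P)=0$ in $\frac{\operatorname{Poiss}_{\mathcal{V}}(M,P)}{\operatorname{Ham}(M,P)}$.

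Next, I would invoke that the map $\gamma^{*}$ of \eqref{eq:CohomProy} is linear, hence sends the zero class to the zero class. By Corollary \ref{cor:ProyMod} this map intertwines the two modular classes, so that
\[
\operatorname{Mod}(M,\mathcal{V},P)=\gamma^{*}\!\left(\operatorname{Mod}(M,\Pi)\right)=\gamma^{*}(0)=0,
\]
which is exactly the asserted unimodularity of $(M,\mathcal{V},P)$.

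There is no genuine obstacle here: the entire content of the lemma has already been absorbed into Proposition \ref{prop:CoupMod} and Corollary \ref{cor:ProyMod}, where the bigraded formula \eqref{eq:CoupMod} exhibits $Z_{P}^{\tau}$ as the $(0,1)$-component $Z_{0,1}$ of the modular vector field $Z=Z_{\Pi}^{\Omega}$. If one preferred a representative-level argument avoiding the cohomological map, one could instead start from a volume form $\Omega=\sigma^{l}\wedge\tau_{\gamma}$ for which $Z_{\Pi}^{\Omega}=\Pi^{\sharp}\operatorname{d}f$ is Hamiltonian, and then read off from $\Pi^{\sharp}=\Pi_{H}^{\sharp}+P^{\sharp}$ that its $\mathbb{V}$-part is $(\Pi^{\sharp}\operatorname{d}f)_{0,1}=P^{\sharp}\operatorname{d}f\in\operatorname{Ham}(M,P)$; comparing with \eqref{eq:CoupMod} gives $Z_{P}^{\tau}=P^{\sharp}\operatorname{d}f$, so the foliated modular class is trivial. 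The only mild point to check in that variant is that every defining volume form may be taken in the normalized shape $\sigma^{l}\wedge\tau_{\gamma}$, which follows since rescaling $\tau$ by a positive function realizes an arbitrary volume form; but the class-level argument through $\gamma^{*}$ is the clean one meant by \emph{direct consequence}.
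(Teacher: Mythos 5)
Your proposal is correct and coincides with the paper's own argument: the paper proves this lemma precisely by citing Corollary \ref{cor:ProyMod}, and your use of the linearity of the map $\gamma^{*}$ in \eqref{eq:CohomProy} to send $\operatorname{Mod}(M,\Pi)=0$ to $\operatorname{Mod}(M,\mathcal{V},P)=0$ is exactly the intended ``direct consequence.'' The representative-level variant you sketch is a fine sanity check but adds nothing needed beyond the class-level argument.
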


Therefore, a necessary condition for vanishing of the modular class of $\Pi$ is $\operatorname{Mod}(M,\mathcal{V},P)=0$. Moreover, it follows from Proposition \ref{prop:ModsAndReeb} that the unimodularity of $\Pi$ implies the unimodularity of the leaf-tangent Poisson structure $P$ in the case when the Reeb class of the foliation $\mathcal{V}$ is trivial.

The next criterion follows from Proposition \ref{prop:CoupMod} and the following well-known fact \cite{We-97}: a Poisson manifold is unimodular if and only if the modular vector field is zero with respect to a certain volume form.

\begin{lemma}\label{lemma:UnimodTau}
The $\mathcal{V}$-coupling Poisson structure $\Pi$ is unimodular if and only if there exists a leaf-wise volume form $\tau\in\Gamma(\wedge^{\operatorname{top}}\mathbb{V}^{*})$, $\mathbb{V}=T\mathcal{V}$, such that
\[
Z_{P}^{\tau}=0\qquad\text{and}\qquad\operatorname{d}_{1,0}^{\gamma}\tau_{\gamma}=0.
\]
\end{lemma}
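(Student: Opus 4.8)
The plan is to combine the bigraded description of the modular vector field from Proposition \ref{prop:CoupMod} with the well-known criterion that a Poisson manifold is unimodular if and only if its modular vector field vanishes for \emph{some} volume form. By Proposition \ref{prop:CoupMod}, for the volume form $\Omega=\sigma^{l}\wedge\tau_{\gamma}$ associated to a leaf-wise volume form $\tau$, the modular vector field $Z=Z^{\Omega}_{\Pi}$ decomposes, relative to the splitting \eqref{eq:SplitHV}, as $Z_{1,0}=-\Pi^{\sharp}(\theta^{\gamma}_{\tau})$ and $Z_{0,1}=Z^{\tau}_{P}$. Since the decomposition $TM=\mathbb{H}\oplus\mathbb{V}$ is a direct sum, $Z$ vanishes if and only if both bigraded components vanish, so $Z^{\Omega}_{\Pi}=0$ is equivalent to the pair of conditions $\Pi^{\sharp}(\theta^{\gamma}_{\tau})=0$ and $Z^{\tau}_{P}=0$.

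First I would establish the backward implication, which is the quick direction: if a leaf-wise volume form $\tau$ exists with $Z^{\tau}_{P}=0$ and $\operatorname{d}^{\gamma}_{1,0}\tau_{\gamma}=0$, then by \eqref{eq:DefTheta} the divergence form satisfies $\theta^{\gamma}_{\tau}\wedge\tau_{\gamma}=\operatorname{d}^{\gamma}_{1,0}\tau_{\gamma}=0$; since $\tau_{\gamma}\in\Gamma(\wedge^{\operatorname{top}}\mathbb{H}^{0})$ is a nowhere-vanishing top-degree form on the $\mathbb{H}^{0}$-factor and $\theta^{\gamma}_{\tau}\in\Gamma(\mathbb{V}^{0})=\Gamma(\mathbb{H}^{0})$ lives in the same factor, wedging with $\tau_{\gamma}$ is injective on $\Gamma(\mathbb{H}^{0})$, forcing $\theta^{\gamma}_{\tau}=0$. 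Then both components of $Z$ in \eqref{eq:CoupMod} vanish, so $Z^{\Omega}_{\Pi}=0$ and $\Pi$ is unimodular by the cited criterion.

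For the forward implication, suppose $\Pi$ is unimodular. Then there exists \emph{some} volume form making the modular vector field vanish; since every volume form on $M$ can be written as $\Omega=\sigma^{l}\wedge\tau_{\gamma}$ for an appropriate leaf-wise volume form $\tau$ (up to a nowhere-vanishing factor that can be absorbed into $\tau$, using the normalization afforded by the fixed coupling form $\sigma$ and its transversal volume element $\sigma^{l}$), we may assume the witnessing volume form is of this type. Applying Proposition \ref{prop:CoupMod} to this $\tau$ gives $Z^{\tau}_{P}=Z_{0,1}=0$ and $\Pi^{\sharp}(\theta^{\gamma}_{\tau})=-Z_{1,0}=0$. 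It remains to convert $\Pi^{\sharp}(\theta^{\gamma}_{\tau})=0$ into $\operatorname{d}^{\gamma}_{1,0}\tau_{\gamma}=0$. Since $\theta^{\gamma}_{\tau}\in\Gamma(\mathbb{V}^{0})$, we have $\Pi^{\sharp}\theta^{\gamma}_{\tau}=\Pi_{H}^{\sharp}\theta^{\gamma}_{\tau}\in\Gamma(\mathbb{H})$, and by \eqref{eq:CouplingForm} the map $\Pi_{H}^{\sharp}|_{\mathbb{V}^{0}}\colon\mathbb{V}^{0}\to\mathbb{H}$ is a vector bundle isomorphism; hence $\Pi^{\sharp}\theta^{\gamma}_{\tau}=0$ forces $\theta^{\gamma}_{\tau}=0$, and then \eqref{eq:DefTheta} yields $\operatorname{d}^{\gamma}_{1,0}\tau_{\gamma}=\theta^{\gamma}_{\tau}\wedge\tau_{\gamma}=0$. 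This closes the equivalence.

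The main obstacle I anticipate is the forward direction's reduction step: the unimodularity hypothesis only provides the existence of \emph{a} volume form $\Omega$ killing the modular vector field, and one must argue that such an $\Omega$ can be taken in the distinguished form $\sigma^{l}\wedge\tau_{\gamma}$ so that Proposition \ref{prop:CoupMod} applies. The key point making this harmless is the change-of-volume formula $Z^{\Omega'}_{\Pi}=Z^{\Omega}_{\Pi}-\Pi^{\sharp}\operatorname{d}\ln|f|$ together with the nondegeneracy of $\sigma$: every volume form factors as a nowhere-vanishing multiple of $\sigma^{l}\wedge\tau_{\gamma}$ because $\sigma^{l}$ is a fixed transversal volume element and the $\mathbb{H}^{0}$-factor of any volume form is then a leaf-wise top form, i.e. of the form $\tau_{\gamma}$; absorbing the scalar into $\tau$ via \eqref{eq:Theta3} preserves the vanishing of the modular vector field, so no generality is lost.
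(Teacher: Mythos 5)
Your proposal is correct and follows essentially the same route as the paper: the paper's own proof is exactly the observation that the lemma follows from Proposition \ref{prop:CoupMod} together with the cited fact that unimodularity is equivalent to the vanishing of the modular vector field for \emph{some} volume form, and your reduction (any volume form on $M$ equals $\sigma^{l}\wedge\tau_{\gamma}$ after absorbing a nowhere-vanishing factor into $\tau$) plus the isomorphism $\Pi_{H}^{\sharp}|_{\mathbb{V}^{0}}\colon\mathbb{V}^{0}\to\mathbb{H}$ supplies precisely the details the paper leaves implicit. One slip in your backward direction: the assertion ``$\theta_{\tau}^{\gamma}\in\Gamma(\mathbb{V}^{0})=\Gamma(\mathbb{H}^{0})$ lives in the same factor'' is false --- $\mathbb{V}^{0}$ and $\mathbb{H}^{0}$ are \emph{complementary} subbundles of $T^{*}M$, and the injectivity of $\alpha\mapsto\alpha\wedge\tau_{\gamma}$ holds on $\Gamma(\mathbb{V}^{0})$ precisely because $\theta_{\tau}^{\gamma}$ has bidegree $(1,0)$ while $\tau_{\gamma}$ has top bidegree $(0,k)$ in the other factor; were $\theta_{\tau}^{\gamma}$ actually a section of $\mathbb{H}^{0}$, wedging with $\tau_{\gamma}$ would be identically zero and the step would collapse. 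Since $\theta_{\tau}^{\gamma}$ does lie in $\Gamma(\mathbb{V}^{0})$ (equivalently, one can invoke the uniqueness of $\theta_{\tau}^{\gamma}$ in \eqref{eq:DefTheta}), the conclusion $\theta_{\tau}^{\gamma}=0\Longleftrightarrow\operatorname{d}_{1,0}^{\gamma}\tau_{\gamma}=0$ stands, so this is a mis-stated justification rather than a genuine gap.
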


It follows that the unimodularity of $\Pi$ is independent of the coupling form $\sigma$. In other words, the mapping
\[
(\gamma,\sigma,P)\mapsto(\gamma,\widetilde{\sigma},P)
\]
is a foliation-preserving transformation which do not alter the unimodularity property, provided that $\widetilde{\sigma}$ satisfies the nondegeneracy condition and the structure equations \eqref{eq:SE1}, \eqref{eq:SE2}. This is also a ``singular'' analog of the fact that, for a regular Poisson manifold, the unimodularity is independent of the leaf-wise symplectic form.

Now let us describe a special class of gauge transformations which preserve the coupling Poisson structures and naturally appear in the context of the averaging method \cite{VallVo-14}. Consider the case when the gauge form $B$ is exact with a primitive $\mu$ vanishing along the leaves of the foliation $\mathcal{V}$:
\begin{equation}\label{eq:Gau2}
B=-\operatorname{d}\mu,\quad\mu\in\Gamma(\mathbb{V}^{0}).
\end{equation}
Then, assuming that the map \eqref{eq:Gau1} is invertible, one can show \cite{VallVo-14} that the Poisson structure $\widetilde{\Pi}$ resulting of the gauge transformation of $\Pi$ is again $\mathcal{V}$-coupling. Furthermore, if $(\widetilde{\gamma},\widetilde{\sigma},\widetilde{P})$ is the geometric data associated to $\widetilde{\Pi}$, then $\widetilde{P}=P$ and $\widetilde{\gamma}$ is related to $\gamma$ by
\begin{equation}\label{eq:Gau3}
\gamma(X)-\widetilde{\gamma}(X)=P^{\sharp}\operatorname{d}[\mu(X)] \qquad\forall X\in\mathfrak{X}_{\mathrm{pr}}(M).
\end{equation}

Fix a nowhere vanishing section $\tau\in\Gamma(\wedge^{\operatorname{top}}\mathbb{V}^{\ast})$ and let us look at the corresponding divergence forms $\theta_{\tau}^{\widetilde{\gamma}}$ and $\theta_{\tau}^{\gamma}$. By relations \eqref{eq:Theta1} and \eqref{eq:Gau3}, for every $X\in\mathfrak{X}_{\operatorname{pr}}(M,\mathcal{V}),$ we have
\begin{equation}\label{eq:Gau4}
\theta_{\tau}^{\widetilde{\gamma}}(X)-\theta_{\tau}^{\gamma}(X) = \operatorname{div}^{\tau}(P^{\sharp}\operatorname{d}\mu(X)) = \operatorname{L}_{Z_{P}^{\tau}}(\mu(X)).
\end{equation}
Here, we used the identity \eqref{eq:DivHam}. Formulas \eqref{eq:CoupMod}, \eqref{eq:Gau4}, give the transition rule for the modular vector fields of $\Pi$ and $\widetilde{\Pi}$.

Next, if $\Pi$ is unimodular, then by Lemma \ref{lemma:UnimodTau} we can choose a leaf-wise volume form $\tau$ of $\mathcal{V}$ such that $Z_{P}^{\tau}=0$ and $\theta_{\tau}^{\gamma}=0$. In this case, we have
$\theta_{\tau}^{\tilde{\gamma}}=\theta_{\tau}^{\gamma}=0$. Hence by Proposition \ref{prop:CoupMod}, if the modular vector field of $\Pi$ with respect to the volume form $\Omega=\sigma^{l}\wedge\tau_{\gamma}$ is zero, then the modular vector field of $\widetilde{\Pi}$ with respect to $\widetilde{\Omega}=\sigma^{l}\wedge\tau_{\tilde{\gamma}}$ is also zero.

\paragraph{Cohomological Obstructions to the Unimodularity.} By Lemma \ref{lemma:Unimods}, a necessary condition for the unimodularity of the $\mathcal{V}$-coupling Poisson structure $\Pi$ on $(M,\mathcal{V})$ is the unimodularity of the Poisson foliation $(M,\mathcal{V},P)$. We will show that this condition is not sufficient, since there exists a cohomological obstruction to the unimodularity of $\Pi$.

Consider the Poisson foliation $(M,\mathcal{V},P,\gamma)$ equipped with the Poisson connection $\gamma$ corresponding to the normal bundle $\mathbb{H}$ in \eqref{NB}. Then, one can associate to this setup the following cochain complex $(\mathcal{C}^{\bullet},\overline{\operatorname{d}}^{\gamma})$, where the subspaces $\mathcal{C}^{p}\subset\Gamma(\wedge^{p}\mathbb{V}^{0})$ are defined by
\begin{equation}\label{eq:dRCasim}
\mathcal{C}^{p}:=\{\beta\in\Gamma(\wedge^{p}\mathbb{V}^{0})\mid\mathbf{i}_{X_{1}}\ldots\mathbf{i}_{X_{p}}\beta\in\operatorname{Casim}(M,P),~\forall
~X_{i}\in\mathfrak{X}_{\mathrm{pr}}(M,\mathcal{V})\}
\end{equation}
and $\overline{\operatorname{d}}^{\gamma}:=\operatorname{d}_{1,0}^{\gamma}|_{\mathcal{C}^{\bullet}}$ is the restriction of $\operatorname{d}_{1,0}^{\gamma}$ to $\mathcal{C}^{\bullet}$. Therefore, $\mathcal{C}^{p}$ consists of $p$-forms on $M$ vanishing along the leaves of $\mathcal{V}$ and taking values in the space of Casimir functions of $P$ on the projectable vector fields.

There exists the following short exact sequence \cite{VeVo-16}:
\begin{equation}\label{eq:ShortExact}
0\rightarrow H_{\overline{\operatorname{d}}^{\gamma}}^{1}\overset{(\Pi_{H}^{\sharp})^{*}}{\longrightarrow}H_{\Pi}^{1}(M)\overset{\gamma^{*}}{\longrightarrow} \frac{\ker\rho}{\operatorname{Ham}(M,P)}\rightarrow0,
\end{equation}
where $\rho:\mathcal{A}^{\gamma}\rightarrow H_{\overline{\operatorname{d}}^{\gamma}}^{2}$ is a morphism from a Lie subalgebra $\mathcal{A}^{\gamma}\subset\operatorname{Poiss}_{\mathcal{V}}(M,P)$, associated to the pair $(\gamma,P)$, to the second cohomology space of $(\mathcal{C}^{\bullet},\overline{\operatorname{d}}^{\gamma})$.

According to Corollary \ref{cor:ProyMod} and \eqref{eq:ShortExact}, if
\begin{equation}\label{MM}
\operatorname{Mod}(M,\mathcal{V},P)=0,
\end{equation}
then there exists a unique cohomology class in $H_{\overline{\operatorname{d}}^{\gamma}}^{1}$ such that its image under $-(\Pi^{\sharp}_{H})^{*}$ is $\operatorname{Mod}(M,\Pi)$. This cohomology class can be described as follows.

\begin{theorem}\label{teo:Class}
Let $M$ be an orientable manifold, and $\mathcal{V}$ an orientable foliation on $M$. Suppose that the $\mathcal{V}$-coupling Poisson structure $\Pi$ on $M$ satisfies \eqref{MM}. Fix a leaf-wise volume form $\tau\in\Gamma(\wedge^{\operatorname{top}}\mathbb{V}^{*})$ of $\mathcal{V}$ such that $Z_{P}^{\tau}=0$ and consider the Poisson connection $\gamma$ associated to $\mathbb{H}$ in \eqref{NB}. Then, the corresponding divergence form $\theta_{\tau}^{\gamma}$ in \eqref{eq:DefTheta} is a 1-cocycle of the cochain complex $(\mathcal{C}^{\bullet},\overline{\operatorname{d}}^{\gamma})$, $\theta_{\tau}^{\gamma}\in\mathcal{C}^{1}$ and $\overline{\operatorname{d}}^{\gamma}\theta_{\tau}^{\gamma}=0$. Furthermore, the $\overline{\operatorname{d}}^{\gamma}$-cohomology class of $\theta_{\tau}^{\gamma}$ is independent of the choice of $\tau$ and related with the modular class of $\Pi$ by $\operatorname{Mod}(M,\Pi)=-(\Pi_{H}^{\sharp})^{*}[\theta_{\tau}^{\gamma}]$.
\end{theorem}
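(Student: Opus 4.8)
The plan is to lean as heavily as possible on the hypothesis $Z_P^\tau=0$, which collapses the two basic identities \eqref{eq:DivHam} and \eqref{eq:ModAndPoiss} into exactly the statements the theorem needs. Indeed, with $Z_P^\tau=0$, identity \eqref{eq:DivHam} reads $\operatorname{div}^\tau(P^\sharp\operatorname{d}f)=0$ for every $f\in C^\infty(M)$ (divergence annihilates Hamiltonian fields), while \eqref{eq:ModAndPoiss} reads $P^\sharp\operatorname{d}(\operatorname{div}^\tau X)=0$ for every $X\in\mathfrak{X}_{\mathrm{pr}}(M,\mathcal{V})\cap\operatorname{Poiss}(M,P)$, i.e. $\operatorname{div}^\tau X$ is then a Casimir of $P$. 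These are precisely the two mechanisms behind, respectively, the cocycle condition and the membership in $\mathcal{C}^1$.

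First I would check $\theta_\tau^\gamma\in\mathcal{C}^1$. Since $\theta_\tau^\gamma\in\Gamma(\mathbb{V}^0)$, for arbitrary $X\in\mathfrak{X}_{\mathrm{pr}}(M,\mathcal{V})$ one has $\theta_\tau^\gamma(X)=\theta_\tau^\gamma(\operatorname{pr}_{\mathbb{H}}X)$; a one-line bracket computation shows the $\mathbb{H}$-component of a projectable field is again projectable, so $\operatorname{pr}_{\mathbb{H}}X\in\Gamma_{\mathrm{pr}}(\mathbb{H})$. By the structure equation \eqref{eq:SE0} one has $\Gamma_{\mathrm{pr}}(\mathbb{H})\subset\operatorname{Poiss}(M,P)$, and then \eqref{eq:Theta1} gives $\theta_\tau^\gamma(\operatorname{pr}_{\mathbb{H}}X)=\operatorname{div}^\tau(\operatorname{pr}_{\mathbb{H}}X)$, which is Casimir by the consequence of \eqref{eq:ModAndPoiss} above; hence $\theta_\tau^\gamma\in\mathcal{C}^1$. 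For the cocycle condition, I would evaluate $\operatorname{d}_{1,0}^\gamma\theta_\tau^\gamma$ on projectable $\mathbb{H}$-sections via \eqref{eq:Theta2}, getting $\operatorname{div}^\tau(R^\gamma(X_1,X_2))$; the curvature identity \eqref{eq:SE1} rewrites $R^\gamma(X_1,X_2)$ as a Hamiltonian field $P^\sharp\operatorname{d}[-\sigma(X_1,X_2)]$, whose divergence vanishes by the consequence of \eqref{eq:DivHam}. Since $\operatorname{d}_{1,0}^\gamma\theta_\tau^\gamma\in\Gamma(\wedge^2\mathbb{V}^0)$ is tensorial and projectable $\mathbb{H}$-sections locally span $\mathbb{H}$, it must vanish identically, so $\overline{\operatorname{d}}^\gamma\theta_\tau^\gamma=0$.

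Next I would treat independence of $\tau$. Two admissible choices differ by $\tau'=f\tau$ with $f$ nowhere vanishing, and the transformation rule for the foliated modular vector field together with $Z_P^{\tau'}=Z_P^\tau=0$ forces $f$, and hence $\ln|f|$, to be a Casimir of $P$, i.e. $\ln|f|\in\operatorname{Casim}(M,P)=\mathcal{C}^0$. Relation \eqref{eq:Theta3} then gives $\theta_{\tau'}^\gamma-\theta_\tau^\gamma=\operatorname{d}_{1,0}^\gamma\ln|f|=\overline{\operatorname{d}}^\gamma\ln|f|$, a genuine coboundary in $(\mathcal{C}^\bullet,\overline{\operatorname{d}}^\gamma)$, so the class $[\theta_\tau^\gamma]\in H^1_{\overline{\operatorname{d}}^\gamma}$ is well defined. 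For the final formula I would invoke Proposition \ref{prop:CoupMod} with this same $\tau$: since $Z_{0,1}=Z_P^\tau=0$, the modular vector field of $\Omega=\sigma^l\wedge\tau_\gamma$ reduces to $Z=Z_{1,0}=-\Pi^\sharp\theta_\tau^\gamma$, and because $\theta_\tau^\gamma\in\Gamma(\mathbb{V}^0)$ annihilates $\mathbb{V}$ while $P\in\Gamma(\wedge^2\mathbb{V})$, one has $\Pi^\sharp\theta_\tau^\gamma=\Pi_H^\sharp\theta_\tau^\gamma$. Passing to Poisson cohomology and using $(\Pi_H^\sharp)^*[\beta]=[\Pi_H^\sharp\beta]$ yields $\operatorname{Mod}(M,\Pi)=[-\Pi_H^\sharp\theta_\tau^\gamma]=-(\Pi_H^\sharp)^*[\theta_\tau^\gamma]$.

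The main obstacle I anticipate is conceptual rather than computational: one must keep the two distinct vanishing mechanisms straight and confirm that the conclusions land in the Casimir-valued subcomplex $\mathcal{C}^\bullet$ and not merely in the full $\operatorname{d}_{1,0}^\gamma$-complex. Membership in $\mathcal{C}^1$ rests on $\operatorname{div}^\tau X$ being Casimir (from \eqref{eq:ModAndPoiss}), whereas the cocycle property rests on the logically different fact that $\operatorname{div}^\tau$ kills Hamiltonian fields (from \eqref{eq:DivHam}); both degenerate only because $Z_P^\tau=0$. The one point demanding care is the reduction to projectable $\mathbb{H}$-sections — justifying that $\operatorname{pr}_{\mathbb{H}}$ preserves projectability and that tensorial $(p,0)$-forms are determined by their values on such sections — which is what upgrades the pointwise identities \eqref{eq:Theta1} and \eqref{eq:Theta2} into global conclusions.
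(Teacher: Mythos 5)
Your proof is correct and takes essentially the same route as the paper's: membership in $\mathcal{C}^{1}$ via \eqref{eq:SE0}, \eqref{eq:Theta1} and \eqref{eq:ModAndPoiss} with $Z_{P}^{\tau}=0$; closedness via \eqref{eq:Theta2}, \eqref{eq:SE1} and \eqref{eq:DivHam}; independence of $\tau$ from the Casimir relation between admissible volume forms together with \eqref{eq:Theta3}; and the final formula from Proposition \ref{prop:CoupMod}. The only difference is that you make explicit the reduction to projectable $\mathbb{H}$-sections (projectability of $\operatorname{pr}_{\mathbb{H}}X$ and the tensoriality argument), which the paper leaves implicit.
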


\begin{proof}
By \eqref{eq:SE0}, every projectable section $X\in\Gamma_{\mathrm{pr}}(\mathbb{H})$ is a Poisson vector field of $P$. Then, by using the condition
$Z_{P}^{\tau}=0$, properties \eqref{eq:Theta1} and \eqref{eq:ModAndPoiss}, we get
\[
P^{\sharp}\operatorname{d}[\theta_{\tau}^{\gamma}(X)] = P^{\sharp}\operatorname{d}[\operatorname{div}^{\tau}(X)] = [Z_{P}^{\tau},X]=0.
\]
Therefore, $\theta_{\tau}^{\gamma}(X)\in\operatorname{Casim}(M,P)$ $\forall X\in\Gamma_{\mathrm{pr}}(\mathbb{H})$ and hence $\theta_{\tau}^{\gamma}\in\mathcal{C}^{1}$. Moreover, relations \eqref{eq:Theta2}, \eqref{eq:SE1} and \eqref{eq:DivHam} imply that $\theta_{\tau}^{\gamma}$ is $\overline{\operatorname{d}}^{\gamma}$-closed. Indeed, for all $X_{1},X_{2}\in\mathfrak{X}_{\mathrm{pr}}(M,\mathcal{V})$,
\begin{align*}
(\overline{\operatorname{d}}^{\gamma}\theta_{\tau}^{\gamma})(X_{1},X_{2}) &= \operatorname{div}^{\tau}(R^{\gamma}(X_{1},X_{2})) = -\operatorname{div}^{\tau}(P^{\sharp}\operatorname{d}[\sigma(X_{1},X_{2})])\\
&= -L_{Z_{P}^{\tau}}[\sigma(X_{1},X_{2})]=0.
\end{align*}
Note that any two leaf-wise volume forms for which the modular vector fields of the Poisson foliation $(M,\mathcal{V},P)$ vanish are related by multiplication of a Casimir function. Thus, it follows from the transition rule \eqref{eq:Theta3} that $[\theta_{\tau}^{\gamma}]\in H_{\overline{\operatorname{d}}^{\gamma}}^{1}$ is independent on the choice of $\tau$. Finally, $\operatorname{Mod}(M,\Pi) = -(\Pi_{H}^{\sharp})^{*}[\theta_{\tau}^{\gamma}]$ follows from \eqref{eq:CoupMod}.
\end{proof}

\begin{corollary}\label{cor:Obstruction}
If the Poisson foliation $(M,\mathcal{V},P)$ associated to the $\mathcal{V}$-coupling Poisson structure $\Pi$ is unimodular, then the unimodularity of $\Pi$ is equivalent to the triviality of the $\overline{\operatorname{d}}^{\gamma}$-cohomology class of $\theta_{\tau}^{\gamma}$, that is, $\operatorname{Mod}(M,\Pi)=0\Longleftrightarrow[\theta_{\tau}^{\gamma}]=0$.
\end{corollary}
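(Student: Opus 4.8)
The plan is to read off the corollary as a formal consequence of Theorem \ref{teo:Class} together with the exactness of the sequence \eqref{eq:ShortExact}, with essentially no further computation required. First I would observe that the standing hypothesis---that the Poisson foliation $(M,\mathcal{V},P)$ is unimodular---is precisely condition \eqref{MM}. This is exactly what is needed to invoke Theorem \ref{teo:Class}, which then provides a leaf-wise volume form $\tau$ with $Z_{P}^{\tau}=0$, guarantees that $\theta_{\tau}^{\gamma}$ is a well-defined $\overline{\operatorname{d}}^{\gamma}$-cocycle whose class $[\theta_{\tau}^{\gamma}]\in H_{\overline{\operatorname{d}}^{\gamma}}^{1}$ is independent of the choice of $\tau$, and---crucially---yields the identity
\begin{equation*}
\operatorname{Mod}(M,\Pi)=-(\Pi_{H}^{\sharp})^{*}[\theta_{\tau}^{\gamma}].
\end{equation*}

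From this single identity the reverse implication is immediate: if $[\theta_{\tau}^{\gamma}]=0$, then the right-hand side vanishes and so $\operatorname{Mod}(M,\Pi)=0$. For the forward implication I would appeal to the injectivity of $(\Pi_{H}^{\sharp})^{*}$. Since $(\Pi_{H}^{\sharp})^{*}$ is the leftmost arrow in the short exact sequence \eqref{eq:ShortExact}, it has trivial kernel; hence $\operatorname{Mod}(M,\Pi)=0$, combined with the displayed formula, forces $(\Pi_{H}^{\sharp})^{*}[\theta_{\tau}^{\gamma}]=0$, and injectivity then gives $[\theta_{\tau}^{\gamma}]=0$. Packaging the two implications together yields the stated equivalence.

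I do not expect a genuine obstacle here, since all the analytic and cohomological work has already been absorbed into Theorem \ref{teo:Class} and into the construction of \eqref{eq:ShortExact}. The one point that must be firmly in place is the exactness of that sequence---specifically the injectivity of $(\Pi_{H}^{\sharp})^{*}$---which is what upgrades the a priori one-directional relation $\operatorname{Mod}(M,\Pi)=-(\Pi_{H}^{\sharp})^{*}[\theta_{\tau}^{\gamma}]$ into a genuine if-and-only-if statement at the level of the class $[\theta_{\tau}^{\gamma}]$. Without injectivity one would only obtain that $[\theta_{\tau}^{\gamma}]=0$ is \emph{sufficient} for unimodularity of $\Pi$; it is the exactness at $H_{\overline{\operatorname{d}}^{\gamma}}^{1}$ that makes it necessary as well.
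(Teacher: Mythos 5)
Your proposal is correct and follows exactly the paper's intended argument: the paper states the corollary without a separate proof precisely because, as you note, it is immediate from the identity $\operatorname{Mod}(M,\Pi)=-(\Pi_{H}^{\sharp})^{*}[\theta_{\tau}^{\gamma}]$ of Theorem \ref{teo:Class} combined with the injectivity of $(\Pi_{H}^{\sharp})^{*}$ coming from exactness of \eqref{eq:ShortExact} (this is also how the paper phrases the uniqueness of the class in the paragraph preceding the theorem). You have correctly isolated the one nontrivial ingredient, namely that injectivity is what turns sufficiency of $[\theta_{\tau}^{\gamma}]=0$ into necessity.
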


\begin{example}
Consider the particular case when the leaf-tangent Poisson structure $P$ is trivial, $P=0$. Then, the coupling Poisson structure $\Pi$ is regular since its characteristic distribution coincides with the normal bundle $\mathbb{H}$. Moreover, $(\mathcal{C}^{\bullet},\overline{\operatorname{d}}^{\gamma})$ identifies with the foliated de Rham complex of the symplectic foliation $\mathcal{S}$ of $\Pi$. In particular, the cohomology class $[\theta_{\tau}^{\gamma}]$ coincides with the Reeb class $\operatorname{Mod}(\mathcal{S})$.
\end{example}

Note that the coupling Poisson structure $\Pi$ with $P=0$ can be characterized as a regular Poisson structure whose symplectic foliation $\mathcal{S}$ admits a transversal foliation $\mathcal{V}$, $TM=T\mathcal{S\oplus}T\mathcal{V}$. So, in this case, the unimodularity criterion of Corollary \ref{cor:Obstruction} recovers the results due to \cite{We-97,AB-03}.

\section{Flat Poisson Foliations}\label{sec:Flat}

Suppose we start with a Poisson foliation $(M,\mathcal{V},P)$ consisting of a regular foliation $\mathcal{V}$ on $M$ and a leaf-tangent Poisson structure $P\in\Gamma(\wedge^{2}\mathbb{V})$. Suppose we are also given a regular foliation $\mathcal{F}$ on $M$ with properties: the tangent bundle
$\mathbb{F}:=T\mathcal{F}$ is complementary to $\mathbb{V}=T\mathcal{V}$, $TM=\mathbb{F}\oplus\mathbb{V}$,
and every $\mathcal{V}$-projectable section $Z$ of $\mathbb{F}$ is a Poisson vector field on $(M,P)$,
\begin{equation}\label{eq:FlatPoissConn}
Z\in\Gamma_{\mathrm{pr}}(\mathbb{F}) ~\Longrightarrow~ L_{Z}P=0.
\end{equation}
In other words, there is a flat Poisson connection $\gamma_{0}\in\Gamma(T^{*}M\otimes\mathbb{V})$ on $(M,\mathcal{V},P)$ associated to the tangent bundle of $\mathcal{F}$, $\mathbb{F}=\ker\gamma_{0}$, and hence $\gamma_{0}:TM\rightarrow\mathbb{V}$ is the projection along $\mathbb{F}$.

Let us associate to the \emph{flat Poisson foliation} $(M,\mathcal{V},P,\mathcal{F})$ the following objects. According to the dual splitting $T^{*}M = \mathbb{V}^{0}\oplus\mathbb{F}^{0}$, we have the bigrading of differential forms on $M$ and the bigraded decomposition of the exterior differential on $M$: $\operatorname{d}=\partial_{\mathcal{F}}+\partial_{\mathcal{V}}$, where $\partial_{\mathcal{F}}:=\operatorname{d}_{1,0}^{\gamma_{0}}$ and $\partial_{\mathcal{V}}:=\operatorname{d}_{0,1}^{\gamma_{0}}$ are the coboundary operators on $\Gamma(\wedge^{\bullet}T^{*}M)$ associated to the foliated differentials $\operatorname{d}_{\mathcal{F}}$ and $\operatorname{d}_{\mathcal{V}}$. So, $\partial_{\mathcal{F}}^{2}=0$, $\partial_{\mathcal{V}}^{2}=0$ and $\partial_{\mathcal{F}}\partial_{\mathcal{V}}+\partial_{\mathcal{V}}\partial_{\mathcal{F}}=0$.

Consider the subspaces $\mathcal{C}^{p}$ defined in \eqref{eq:dRCasim}. In particular, $\mathcal{C}^{0}=\operatorname{Casim}(M,P)$. Furthermore, because of \eqref{eq:FlatPoissConn}, $\mathcal{C}^{\bullet} := \bigoplus_{p\in\mathbb{Z}}\mathcal{C}^{p}$ is a $\partial_{\mathcal{F}}$-invariant subspace of $\Gamma(\wedge^{\bullet}T^{*}M)$ and hence the restriction $\overline{\partial}_{\mathcal{F}} := \partial_{\mathcal{F}}|_{\mathcal{C}^{\bullet}}$ is a well-defined coboundary operator. This gives rise to a cochain subcomplex $(\mathcal{C}^{\bullet},\overline{\partial}_{\mathcal{F}})$ of $(\Gamma(\wedge^{\bullet}\mathbb{V}^{0}),\partial_{\mathcal{F}})$ attributed to the flat Poisson foliation which will be called the \emph{foliated de Rham-Casimir complex} \cite{VeVo-16}. The corresponding cohomology space will be denoted by $H_{\overline{\partial}_{\mathcal{F}}}^{\bullet}$.

We have the following useful property \cite{VeVo-16}.

\begin{lemma}\label{lemma:Injec}
The natural homomorphism from $H_{\overline{\partial}_{\mathcal{F}}}^{1}$ to the first foliated de Rham cohomology $H_{\operatorname{dR}}^{1}(\mathcal{F})$ is injective if and only if
\begin{equation}\label{eq:Injec}
\partial_{\mathcal{F}}(\operatorname{Casim}(M,P))=\partial_{\mathcal{F}}(C^{\infty}(M))\cap\mathcal{C}^{1}.
\end{equation}
\end{lemma}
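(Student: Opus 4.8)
The plan is to unwind the definition of the natural homomorphism and reduce the asserted equivalence to a single inclusion of subspaces of $\Gamma(\wedge^{1}\mathbb{V}^{0})$, which then matches \eqref{eq:Injec} once the reverse inclusion is seen to hold automatically. First I would recall that $(\mathcal{C}^{\bullet},\overline{\partial}_{\mathcal{F}})$ is, by construction, a subcomplex of $(\Gamma(\wedge^{\bullet}\mathbb{V}^{0}),\partial_{\mathcal{F}})$, and that the latter complex is nothing but the foliated de Rham complex of $\mathcal{F}$: via the isomorphism \eqref{eq:IsoDeRhamFol} (applied to the flat connection $\gamma_{0}$ with normal foliation $\mathcal{F}$), $\partial_{\mathcal{F}}=\operatorname{d}_{1,0}^{\gamma_{0}}$ corresponds to $\operatorname{d}_{\mathcal{F}}$ and $\Gamma(\wedge^{\bullet}\mathbb{V}^{0})\cong\Gamma(\wedge^{\bullet}\mathbb{F}^{*})$. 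Hence the natural homomorphism $H_{\overline{\partial}_{\mathcal{F}}}^{1}\to H_{\operatorname{dR}}^{1}(\mathcal{F})$ is precisely the map induced on first cohomology by the inclusion of complexes $\mathcal{C}^{\bullet}\hookrightarrow\Gamma(\wedge^{\bullet}\mathbb{V}^{0})$.

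Next I would describe the kernel of this induced map explicitly. A class in $H_{\overline{\partial}_{\mathcal{F}}}^{1}$ is represented by some $\beta\in\mathcal{C}^{1}$ with $\overline{\partial}_{\mathcal{F}}\beta=\partial_{\mathcal{F}}\beta=0$; it maps to zero in $H_{\operatorname{dR}}^{1}(\mathcal{F})$ exactly when $\beta=\partial_{\mathcal{F}}f$ for some $f\in C^{\infty}(M)$. The key observation is that for such $\partial_{\mathcal{F}}$-exact $\beta$ the closedness condition $\partial_{\mathcal{F}}\beta=0$ is automatic, since $\partial_{\mathcal{F}}^{2}=0$. Consequently the set of representatives killed by the homomorphism is exactly $\partial_{\mathcal{F}}(C^{\infty}(M))\cap\mathcal{C}^{1}$, while the $\overline{\partial}_{\mathcal{F}}$-coboundaries are $\overline{\partial}_{\mathcal{F}}(\mathcal{C}^{0})=\partial_{\mathcal{F}}(\operatorname{Casim}(M,P))$, using $\mathcal{C}^{0}=\operatorname{Casim}(M,P)$. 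Therefore the homomorphism is injective on $H^{1}$ if and only if $\partial_{\mathcal{F}}(C^{\infty}(M))\cap\mathcal{C}^{1}\subseteq\partial_{\mathcal{F}}(\operatorname{Casim}(M,P))$.

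Finally I would note that the opposite inclusion is always valid: because $\mathcal{C}^{\bullet}$ is $\partial_{\mathcal{F}}$-invariant, one has $\partial_{\mathcal{F}}(\operatorname{Casim}(M,P))=\overline{\partial}_{\mathcal{F}}(\mathcal{C}^{0})\subseteq\mathcal{C}^{1}$, and trivially $\partial_{\mathcal{F}}(\operatorname{Casim}(M,P))\subseteq\partial_{\mathcal{F}}(C^{\infty}(M))$, so that $\partial_{\mathcal{F}}(\operatorname{Casim}(M,P))\subseteq\partial_{\mathcal{F}}(C^{\infty}(M))\cap\mathcal{C}^{1}$. Combining the two inclusions, equality \eqref{eq:Injec} holds precisely when the nontrivial inclusion from the previous step holds, i.e.\ precisely when the natural homomorphism is injective, which is the claim. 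I do not anticipate any analytic difficulty here: the entire argument is a matter of definition-unwinding and diagram bookkeeping, and the only point requiring care is to keep straight which space of $0$-cochains (Casimir functions for $\overline{\partial}_{\mathcal{F}}$ versus all smooth functions for $\partial_{\mathcal{F}}$) controls exactness in each complex, together with the remark that $\partial_{\mathcal{F}}$-exactness already forces $\partial_{\mathcal{F}}$-closedness.
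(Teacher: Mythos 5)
The paper itself gives no proof of this lemma---it is quoted from \cite{VeVo-16}---so there is nothing internal to compare against; judged on its own, your argument is correct and complete. You correctly identify $(\Gamma(\wedge^{\bullet}\mathbb{V}^{0}),\partial_{\mathcal{F}})$ with the foliated de Rham complex of $\mathcal{F}$ (consistently with the paper's remark at the end of Section \ref{sec:OriFol} on the flat case), observe that the natural map is induced by the inclusion of the subcomplex $\mathcal{C}^{\bullet}$, and reduce injectivity to the inclusion $\partial_{\mathcal{F}}(C^{\infty}(M))\cap\mathcal{C}^{1}\subseteq\partial_{\mathcal{F}}(\operatorname{Casim}(M,P))$, the reverse inclusion being automatic from $\mathcal{C}^{0}=\operatorname{Casim}(M,P)$ and the $\partial_{\mathcal{F}}$-invariance of $\mathcal{C}^{\bullet}$; this is exactly the definition-unwinding proof one expects for this statement.
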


We say that a $\mathcal{V}$-coupling Poisson structure $\Pi$ on the flat Poisson foliation $(M,\mathcal{V},P,\mathcal{F})$ is \emph{compatible} if
$\Pi_{0,2}=P$ and the Poisson connection $\gamma$ induced by the normal subbundle $\mathbb{H}=\Pi^{\sharp}(T^{*}M)$ satisfies the condition
\[
\gamma_{0}(X)-\gamma(X)\text{ is tangent to }P^{\sharp}(T^{*}M),\quad\forall X\in\Gamma(TM).
\]
This compatibility condition implies that $\overline{\operatorname{d}}^{\gamma}=\overline{\partial}_{\mathcal{F}}$ and hence the cochain complex $(\mathcal{C}^{\bullet},\overline{\operatorname{d}}^{\gamma})$ associated to $\Pi$ coincides with the foliated de Rham-Casimir complex. Also, we say that $\Pi$ is \emph{strongly compatible} if there exists $\mu\in\Gamma(\mathbb{V}^{0})$ such that $\gamma$ and $\gamma_{0}$ are related by \eqref{eq:Gau3}.

First, we formulate a unimodularity criterion for the class of strongly compatible Poisson structures which involves the injectivity condition \eqref{eq:Injec}.

\begin{theorem}\label{teo:UnimodFlat}
Let $(M,\mathcal{V},P,\mathcal{F})$ be a flat Poisson foliation and $\Pi$ a strongly compatible coupling Poisson structure. If $\Pi$ is unimodular then
\begin{equation}\label{eq:UnimodFlat}
\operatorname{Mod}(\mathcal{F})=0 \qquad\text{and}\qquad \operatorname{Mod}(M,\mathcal{V},P)=0.
\end{equation}
Conversely, under the injectivity condition \eqref{eq:Injec}, the unimodularity of $(M,\Pi)$ is equivalent to \eqref{eq:UnimodFlat}.
\end{theorem}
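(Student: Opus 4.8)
The plan is to funnel both implications through the single cohomological obstruction $[\theta_{\tau}^{\gamma}]$ supplied by Theorem \ref{teo:Class} and Corollary \ref{cor:Obstruction}, transporting it between the connection $\gamma$ of $\Pi$ and the flat reference connection $\gamma_{0}$. The key bridging fact I would establish first is this: once a leaf-wise volume form $\tau$ with $Z_{P}^{\tau}=0$ has been fixed, the two divergence forms coincide, $\theta_{\tau}^{\gamma}=\theta_{\tau}^{\gamma_{0}}$. Indeed, strong compatibility means $\gamma$ and $\gamma_{0}$ are related by \eqref{eq:Gau3} for some $\mu\in\Gamma(\mathbb{V}^{0})$, so the transition rule \eqref{eq:Gau4} gives $\theta_{\tau}^{\gamma_{0}}(X)-\theta_{\tau}^{\gamma}(X)=\operatorname{L}_{Z_{P}^{\tau}}(\mu(X))=0$ for every $X\in\mathfrak{X}_{\mathrm{pr}}(M,\mathcal{V})$; since projectable vector fields span $TM$ pointwise and both forms lie in $\Gamma(\mathbb{V}^{0})$, they agree. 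This identification is precisely what links the obstruction attached to $\Pi$ (built from $\gamma$) to the Reeb class of $\mathcal{F}$ (built from the flat $\gamma_{0}$).

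For the forward implication, suppose $\Pi$ is unimodular. Lemma \ref{lemma:Unimods} immediately yields $\operatorname{Mod}(M,\mathcal{V},P)=0$, which is the second condition. By Lemma \ref{lemma:UnimodTau} I may fix a leaf-wise volume form $\tau$ with $Z_{P}^{\tau}=0$ and $\operatorname{d}_{1,0}^{\gamma}\tau_{\gamma}=0$; in view of \eqref{eq:DefTheta} and the nowhere-vanishing of $\tau_{\gamma}$, the latter forces $\theta_{\tau}^{\gamma}=0$. By the bridging fact, $\theta_{\tau}^{\gamma_{0}}=\theta_{\tau}^{\gamma}=0$. Since $\gamma_{0}$ is flat, $\theta_{\tau}^{\gamma_{0}}$ is a $\partial_{\mathcal{F}}$-cocycle whose class represents the Reeb class of the integral foliation $\mathcal{F}$ of $\mathbb{F}=\ker\gamma_{0}$ (as recorded at the end of Section \ref{sec:OriFol}), whence $\operatorname{Mod}(\mathcal{F})=[\theta_{\tau}^{\gamma_{0}}]=0$.

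For the converse, assume \eqref{eq:Injec} together with $\operatorname{Mod}(\mathcal{F})=0$ and $\operatorname{Mod}(M,\mathcal{V},P)=0$. The vanishing of the foliated modular class again lets me fix $\tau$ with $Z_{P}^{\tau}=0$, so Theorem \ref{teo:Class} applies: $\theta_{\tau}^{\gamma}\in\mathcal{C}^{1}$ is $\overline{\operatorname{d}}^{\gamma}$-closed and $\operatorname{Mod}(M,\Pi)=-(\Pi_{H}^{\sharp})^{*}[\theta_{\tau}^{\gamma}]$, while compatibility gives $\overline{\operatorname{d}}^{\gamma}=\overline{\partial}_{\mathcal{F}}$. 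Using $\theta_{\tau}^{\gamma}=\theta_{\tau}^{\gamma_{0}}$ and $\operatorname{Mod}(\mathcal{F})=0$, the form $\theta_{\tau}^{\gamma_{0}}$ is $\partial_{\mathcal{F}}$-exact, that is, $\theta_{\tau}^{\gamma_{0}}\in\partial_{\mathcal{F}}(C^{\infty}(M))\cap\mathcal{C}^{1}$. By \eqref{eq:Injec} this intersection equals $\partial_{\mathcal{F}}(\operatorname{Casim}(M,P))$, so $\theta_{\tau}^{\gamma_{0}}=\overline{\partial}_{\mathcal{F}}c$ for a Casimir function $c$, i.e. $[\theta_{\tau}^{\gamma}]_{\overline{\operatorname{d}}^{\gamma}}=0$; Corollary \ref{cor:Obstruction} then delivers $\operatorname{Mod}(M,\Pi)=0$.

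The hard part will be exactly this last upgrade. Triviality of the obstruction in the ordinary foliated de Rham cohomology $H_{\operatorname{dR}}^{1}(\mathcal{F})$ only furnishes a primitive among arbitrary smooth functions, whereas the finer de Rham--Casimir cohomology $H_{\overline{\partial}_{\mathcal{F}}}^{1}$ demands a \emph{Casimir} primitive. It is precisely the injectivity condition \eqref{eq:Injec} (equivalently, injectivity of $H_{\overline{\partial}_{\mathcal{F}}}^{1}\to H_{\operatorname{dR}}^{1}(\mathcal{F})$ via Lemma \ref{lemma:Injec}) that closes this gap, and its absence is why only the forward implication survives unconditionally.
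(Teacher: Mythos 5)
Your proof is correct and follows essentially the same route as the paper's: both transport the obstruction class $[\theta_{\tau}^{\gamma}]$ to the flat connection $\gamma_{0}$ via strong compatibility and \eqref{eq:Gau4}, identify $[\theta_{\tau}^{\gamma_{0}}]$ with the Reeb class $\operatorname{Mod}(\mathcal{F})$ as in Section \ref{sec:OriFol}, and close both implications with Corollary \ref{cor:Obstruction} and the injectivity condition \eqref{eq:Injec} (Lemma \ref{lemma:Injec}). The only cosmetic difference is that in the forward direction you invoke Lemma \ref{lemma:UnimodTau} to get the pointwise vanishing $\theta_{\tau}^{\gamma}=0$ (hence $\theta_{\tau}^{\gamma_{0}}=0$ exactly), where the paper works only with cohomology classes; both are valid.
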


\begin{proof}
Since $\Pi$ is compatible, we have $\overline{\operatorname{d}}^{\gamma}=\overline{\partial}_{\mathcal{F}}$, so $H^{1}_{\overline{\operatorname{d}}^{\gamma}}= H^{1}_{\overline{\partial}_{\mathcal{F}}}$. Moreover, if $\operatorname{Mod}(M,\mathcal{V},P)=0$, then the cohomology classes $[\theta^{\gamma}_{\tau}]\in H^{1}_{\overline{\operatorname{d}}^{\gamma}}$ and $[\theta^{\gamma_{0}}_{\tau}]\in H^{1}_{\overline{\partial}_{\mathcal{F}}}$ of the divergence 1-forms also coincide. Indeed, by the strong compatibility, formula \eqref{eq:Gau4} holds, so condition $\operatorname{Mod}(M,\mathcal{V},P)=0$ implies $[\theta^{\gamma}_{\tau}]=[\theta^{\gamma_{0}}_{\tau}]$. On the other hand, as shown in Section \ref{sec:OriFol}, the $\partial_{\mathcal{F}}$-cohomology class of $\theta^{\gamma_{0}}_{\tau}$ is the Reeb class $\operatorname{Mod}(\mathcal{F})\in H^{1}_{\operatorname{dR}}(\mathcal{F})$. In other words, the image of $[\theta^{\gamma_{0}}_{\tau}]$ under the morphism in Lemma \ref{lemma:Injec} is $\operatorname{Mod}(\mathcal{F})$. Finally, recall that by Corollary \ref{cor:Obstruction}, the unimodularity of $(M,\Pi)$ is equivalent to $\operatorname{Mod}(M,\mathcal{V},P)=0$ and $[\theta^{\gamma}_{\tau}]=0$. By our above discussion, this implies $\operatorname{Mod}(\mathcal{F})=0$. Conversely, under the injectivity condition \eqref{eq:Injec}, equation \eqref{eq:UnimodFlat} implies $[\theta^{\gamma}_{\tau}]=[\theta^{\gamma_{0}}_{\tau}]=0$. By Corollary \ref{cor:Obstruction}, the proof is complete.
\end{proof}

We have also the following unimodularity criterion in the case when the first cohomology of the foliated de Rham-Casimir complex is trivial.

\begin{theorem}\label{teo:Unimodular}
Let $\Pi$ be a compatible coupling Poisson structure on the flat Poisson foliation $(M,\mathcal{V},P,\mathcal{F})$. If
\begin{equation}\label{TRV1}
H_{\overline{\partial}_{\mathcal{F}}}^{1}=\{0\},
\end{equation}
then $(M,\Pi)$ is unimodular if and only if $\operatorname{Mod}(M,\mathcal{V},P)=0$.
\end{theorem}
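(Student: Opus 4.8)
The plan is to deduce this statement almost directly from the cohomological obstruction already isolated in Corollary \ref{cor:Obstruction}, using the hypothesis \eqref{TRV1} to kill that obstruction. I would split the equivalence into its two implications.

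For the forward implication, I would appeal to Lemma \ref{lemma:Unimods}: the unimodularity of the coupling Poisson structure $\Pi$ always forces the unimodularity of the associated Poisson foliation $(M,\mathcal{V},P)$, that is, $\operatorname{Mod}(M,\mathcal{V},P)=0$. Note that this direction does not require the vanishing of $H^{1}_{\overline{\partial}_{\mathcal{F}}}$; the hypothesis \eqref{TRV1} enters only in the converse.

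For the converse, I would assume $\operatorname{Mod}(M,\mathcal{V},P)=0$. First I would use that $\Pi$ is \emph{compatible} with the flat Poisson foliation: by the definition of compatibility, the cochain complex $(\mathcal{C}^{\bullet},\overline{\operatorname{d}}^{\gamma})$ attached to $\Pi$ coincides with the foliated de Rham-Casimir complex $(\mathcal{C}^{\bullet},\overline{\partial}_{\mathcal{F}})$, so in particular $H^{1}_{\overline{\operatorname{d}}^{\gamma}}=H^{1}_{\overline{\partial}_{\mathcal{F}}}$, and this group is trivial by \eqref{TRV1}. Next I would invoke Corollary \ref{cor:Obstruction}: since $\operatorname{Mod}(M,\mathcal{V},P)=0$, one may choose a leaf-wise volume form $\tau$ with $Z_{P}^{\tau}=0$, and then the unimodularity of $(M,\Pi)$ is equivalent to the vanishing of the $\overline{\operatorname{d}}^{\gamma}$-cohomology class $[\theta_{\tau}^{\gamma}]$ produced in Theorem \ref{teo:Class}. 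As this class lives in $H^{1}_{\overline{\operatorname{d}}^{\gamma}}=\{0\}$, it vanishes automatically, whence $\operatorname{Mod}(M,\Pi)=0$.

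I do not expect a genuine obstacle here; the argument is essentially a concatenation of Lemma \ref{lemma:Unimods} and Corollary \ref{cor:Obstruction}, with \eqref{TRV1} ensuring that the only potential cohomological obstruction to unimodularity is forced to be zero. The one point that deserves care is checking that the compatibility hypothesis genuinely identifies $\overline{\operatorname{d}}^{\gamma}$ with $\overline{\partial}_{\mathcal{F}}$ on the whole of $\mathcal{C}^{\bullet}$, so that the vanishing of $H^{1}_{\overline{\partial}_{\mathcal{F}}}$ transfers verbatim to $H^{1}_{\overline{\operatorname{d}}^{\gamma}}$; this is precisely what was recorded when compatibility was defined, and I would cite it rather than re-derive it.
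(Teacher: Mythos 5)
Your proposal is correct: both implications are valid, and the converse does follow from Corollary \ref{cor:Obstruction} once compatibility identifies $H^{1}_{\overline{\operatorname{d}}^{\gamma}}$ with $H^{1}_{\overline{\partial}_{\mathcal{F}}}=\{0\}$. The paper's own proof, however, is packaged differently: it does not split the equivalence into two directions at all. It uses compatibility to identify the complexes, observes that the short exact sequence \eqref{eq:ShortExact} then collapses under \eqref{TRV1} so that the projection $\gamma^{*}:H^{1}_{\Pi}(M)\rightarrow\ker\rho/\operatorname{Ham}(M,P)$ is an isomorphism, and concludes both directions simultaneously from the naturality statement of Corollary \ref{cor:ProyMod}, namely $\gamma^{*}(\operatorname{Mod}(M,\Pi))=\operatorname{Mod}(M,\mathcal{V},P)$; no choice of leaf-wise volume form is ever made. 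You instead prove the forward implication by Lemma \ref{lemma:Unimods} and route the converse through the explicit obstruction class $[\theta_{\tau}^{\gamma}]$ of Theorem \ref{teo:Class}, which requires fixing $\tau$ with $Z_{P}^{\tau}=0$ and then noting the class dies in the trivial group. Since Corollary \ref{cor:Obstruction} is itself deduced from the injectivity of $(\Pi_{H}^{\sharp})^{*}$ in \eqref{eq:ShortExact}, the two arguments ultimately rest on the same exact sequence; what your version buys is the explicit remark that \eqref{TRV1} is needed only for the converse implication, while the paper's version buys brevity, symmetry, and independence of any choice of volume form. Your closing point of care is well placed, and the identification $\overline{\operatorname{d}}^{\gamma}=\overline{\partial}_{\mathcal{F}}$ on all of $\mathcal{C}^{\bullet}$ is indeed exactly what the paper records when defining compatibility, so citing it rather than re-deriving it is appropriate.
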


\begin{proof}
By the compatibility condition, we have $H^{1}_{\overline{\operatorname{d}}^{\gamma}}= H^{1}_{\overline{\partial}_{\mathcal{F}}}$. Thus, the short exact sequence \eqref{eq:ShortExact} reads
\[
0\rightarrow H_{\overline{\partial}_{\mathcal{F}}}^{1} \overset{(\Pi_{H}^{\sharp})^{*}}{\longrightarrow} H_{\Pi}^{1}(M) \overset{\gamma^{*}}{\longrightarrow} \frac{\ker\rho}{\operatorname{Ham}(M,P)}
\rightarrow0.
\]
Hence, under condition \eqref{TRV1}, the projection $\gamma^{*}$ is an isomorphism. Moreover, by Corollary \ref{cor:ProyMod}, $\gamma^{*}$ maps $\operatorname{Mod}(M,\Pi)$ to $\operatorname{Mod}(M,\mathcal{V},P).$
\end{proof}

Now let us discuss some realizations of conditions \eqref{eq:Injec}, \eqref{TRV1}. Consider the space $\operatorname{Ham}(M,P)$ of Hamiltonian vector fields of the $\mathcal{V}$-tangent Poisson structure $P$. Then one can introduce the following two subspaces of $\operatorname{Ham}(M,P)$ depending on the foliation $\mathcal{F}$. Let $\operatorname{Ham}_{\mathcal{F}}(M,P) := \{P^{\sharp}\operatorname{d}f \mid f\in C^{\infty}(M), \partial_{\mathcal{F}}f=0\}$ be the Lie algebra of all Hamiltonian vector fields of $\mathcal{F}$-projectable functions, and $\operatorname{Ham}_{0}(M,P):=\{Y\in\operatorname{Ham}(M,P)\mid[Y,\Gamma_{\mathrm{pr}}(\mathbb{F})]=0\}$ the Lie algebra of $\mathcal{F}$-projectable Hamiltonian vector fields. It follows from $\Gamma_{\mathrm{pr}}(\mathbb{F})\subset\operatorname{Poiss}(M,P)$ that $\operatorname{Ham}_{\mathcal{F}}(M,P)\subseteq\operatorname{Ham}_{0}(M,P)$. Then, we have the following fact \cite{VeVo-16}: injectivity condition \eqref{eq:Injec} holds if and only if $\operatorname{Ham}_{\mathcal{F}}(M,P)=\operatorname{Ham}_{0}(M,P)$. This condition together with the assumption $H_{\operatorname{dR}}^{1}(\mathcal{F})=\{0\}$ on the triviality of the first foliated de Rham cohomology of $(M,\mathcal{F})$ implies \eqref{TRV1}.

Moreover, we have the following realization of condition \eqref{TRV1} in the case of a flat Poisson fibration. Suppose we have a transversal bi-fibration $N\overset{\nu}{\leftarrow}M\overset{\pi}{\rightarrow}S$,
\[
TM=\ker d\nu\oplus\ker d\pi.
\]
Let $\mathcal{F}=\{\nu^{-1}(\xi)\}_{\xi\in N}$ and $\mathcal{V}=\{\pi^{-1}(q)\}_{q\in S}$ be the regular foliations of $M$ defined by the fibers of
the submersions $\nu$ and $\pi$, respectively. So, $\mathbb{F}=T\mathcal{F}=\ker d\nu$ and $\mathbb{V}=T\mathcal{V}=\ker d\pi$. Assume also that we are given a Poisson tensor $P\in\Gamma(\wedge^{2}\mathbb{V})$ such that the triple $(M\overset{\pi}{\rightarrow}S,P,\mathcal{F})$ is a flat Poisson fibration, that is, $\Gamma_{\mathrm{pr}}(\mathbb{F})\subset\operatorname{Poiss}(M,P)$. Then, there exists a unique Poisson structure $\Psi$ on $N$ such that the projection $\nu:(M,P)\rightarrow(N,\Psi)$ is a Poisson map. One can show \cite{VeVo-16} that condition \eqref{TRV1} holds if
\[
H_{\operatorname{dR}}^{1}(\mathcal{F})=\{0\}\text{ and }H_{\Psi}^{1}(N)=\{0\}.
\]
Notice that the last condition implies \eqref{eq:Injec}.

We conclude this section with the construction of a class of unimodular compatible Poisson structures.

\paragraph{Flat Coupling Poisson Structures.} Let $(M,\mathcal{V},P,\mathcal{F})$ be a flat Poisson foliation. Suppose we are given a $\overline{\partial}_{\mathcal{F}}$-closed, $\mathbb{F}$-nondegenerated 2-form $\sigma_{0}\in\mathcal{C}^{2}$, that is, $\overline{\partial}_{\mathcal{F}}\sigma_{0}=0$ and
$\sigma_{0}^{\flat}|_{\mathbb{F}}:\mathbb{F}\rightarrow\mathbb{V}^{0}$ is an isomorphism. Then, one can define a coupling Poisson structure associated to the geometric data $(\sigma_{0},\gamma_{0},P)$:
\begin{equation}\label{FLP}
\Pi_{\operatorname{flat}}=\Pi_{F}+P,
\end{equation}
where $\Pi_{F}\in\Gamma(\wedge^{2}\mathbb{F})$ is a bivector field defined by the condition that the restriction $(\Pi_{F})^{\sharp}|_{\mathbb{V}^{0}}$ equals to the inverse of $-\sigma_{0}^{\flat}|_{\mathbb{F}}$. In this case, $\Pi_{F}$ is a regular Poisson tensor which together with $P$ forms a Poisson pair. Since the symplectic foliation of $\Pi_{F}$ is just $\mathcal{F}$, it is clear that $\Pi_{\operatorname{flat}}$ is a compatible Poisson structure and $\operatorname{Mod}(M,\Pi_{F})=-\Pi_{F}^{\sharp}(\operatorname{Mod}(\mathcal{F}))$. Assuming that $\mathcal{V}$ is orientable and equipped with a nowhere vanishing section $\tau\in\Gamma(\wedge^{\operatorname{top}}\mathbb{V}^{*})$, we define a volume form as $\Omega_{0}=\sigma_{0}^{l}\wedge\tau_{\gamma_{0}}$, $2l:=\operatorname{rank}\mathbb{F}$. Then, the modular vector field of $\Pi_{\operatorname{flat}}$ relative to $\Omega_{0}$ is represented as $Z_{\Pi_{\operatorname{flat}}}^{\Omega_{0}}=Z_{\Pi_{F}}^{\Omega_{0}}+$ $Z_{P}^{\Omega_{0}}$. Under the injectivity condition \eqref{eq:Injec}, we conclude from \eqref{eq:ModsFlat} and Theorem \ref{teo:UnimodFlat} that $\Pi_{\operatorname{flat}}$ is unimodular if and only if $\operatorname{Mod}(\mathcal{F})=0$ and $\operatorname{Mod}(M,P)=0$. In this case, according to Proposition \ref{prop:Gauge}, a gauge transformation \eqref{eq:Gau1}, \eqref{eq:Gau2} modifies $\Pi_{\operatorname{flat}}$ preserving the unimodularity property.

\section{Coupling Neighborhoods of a Symplectic Leaf}\label{sec:Leaf}

Let $(M,\Pi)$ be a Poisson manifold and $\iota:S\hookrightarrow M$ an embedded symplectic leaf.

By a \emph{coupling neighborhood} of $S$, we mean an open neighborhood $N$ of $S$ in $M$ equipped with a surjective submersion $\pi:N\rightarrow S$ such that $\pi\circ\iota=\operatorname{Id}_{S}$,
\begin{equation}\label{eq:CoupNeigh}
\operatorname{rank}\mathbb{H}=\dim S,\qquad\text{and}\qquad\mathbb{H}\cap\mathbb{V}=\{0\},
\end{equation}
where $\mathbb{V}=\ker d\pi$ is the vertical subbundle of $\pi$ and $\mathbb{H}=\Pi^{\sharp}(\mathbb{V}^{0})$ is the horizontal subbundle
associated to $\Pi$. It is clear that conditions in \eqref{eq:CoupNeigh} are equivalent to the splitting $TN=\mathbb{H}\oplus\mathbb{V}$. Therefore the restriction $\Pi|_{N}$ is a $\mathcal{V}$-coupling Poisson structure on $N$, where the foliation $\mathcal{V}=\mathcal{V}^{\pi}:=\{N_{q}=\pi^{-1}(q)\}_{q\in S}$ is given by the $\pi$-fibers. Taking into account that the symplectic leaf $S$ is an orientable manifold, we conclude that the Reeb class of $\mathcal{V}^{\pi}$ is trivial (see Example \ref{ex:FiberReeb}). So, $\Pi|_{N}$ has a bigraded decomposition into a horizontal part of constant rank and a vertical Poisson tensor $P\in\Gamma(\wedge^{2}\mathbb{V})$ vanishing at the points of $S$. The Poisson structure $P$ is said to be a \emph{transverse Poisson structure} of the leaf. The restriction $P_{q}:=P|_{N_{q}}$ of $P$ to the fiber $N_{q}$ over every point $q\in S$ just gives the transverse Poisson structure of $q$ due to Weinstein's splitting theorem \cite{We-83}. Moreover, the Poisson connection $\gamma$ on $N$ is defined by $\ker\gamma=\Pi^{\sharp}(\mathbb{V}^{0})$ and the coupling form $\sigma\in\Gamma(\wedge^{2}\mathbb{V}^{0})$ has the representation $\sigma=\pi^{*}\omega+\tilde{\sigma}$, where $\omega$ is the symplectic form on $S$ and $\tilde{\sigma}$ is a horizontal 2-form vanishing at $S$.

For a given embedded symplectic leaf $S$, there exists always such a coupling neighborhood $N$ \cite{Vo-01}. In particular, one can choose $N$ as a tubular neighborhood of $S$ diffeomorphic to the normal bundle $E=T_{S}M\diagup TS$ of the symplectic leaf. If the normal bundle $E$ is orientable, then $N$ admits a volume form. Of course, this is true in the case when $M$ is orientable. Hence, under the orientability hypothesis, the point is to study the germs at $S$ of the modular vector fields of $\Pi$ and the corresponding germified modular class.

First we formulate the following result.

\begin{proposition}\label{prop:LeafMods}
If the Poisson structure $\Pi$ is unimodular in a neighborhood of the embedded symplectic leaf $S$, then there exists a coupling neighborhood $N$ of $S$ such that the transverse Poisson structure $P$ of $S$ is also unimodular.
\end{proposition}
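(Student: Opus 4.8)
The plan is to reduce the statement to the two unimodularity results already established for coupling Poisson structures, exploiting the fact that the symplectic base $S$ forces the Reeb class of the coupling foliation to vanish. First I would fix an orientable neighborhood $U$ of $S$ on which $\Pi$ is unimodular; such a $U$ exists by hypothesis, since unimodularity presupposes an invariant volume form and hence orientability. By \cite{Vo-01}, $S$ admits a coupling neighborhood, and because the defining conditions \eqref{eq:CoupNeigh} are pointwise and stable under restriction to any open set containing $\iota(S)$, one may shrink it to a coupling neighborhood $N\subseteq U$ with projection $\pi:N\rightarrow S$. On $N$ the restriction $\Pi|_{N}$ is a $\mathcal{V}$-coupling Poisson structure for $\mathcal{V}=\mathcal{V}^{\pi}$, with geometric data $(\gamma,\sigma,P)$, where $P=(\Pi|_{N})_{0,2}$ is the transverse Poisson structure of $S$. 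Since the modular vector field is determined by the local relation $-\mathbf{i}_{Z}\Omega=\operatorname{d}\mathbf{i}_{\Pi}\Omega$, unimodularity descends to open subsets: restricting an invariant volume form of $(U,\Pi|_{U})$ to $N$ shows that $\Pi|_{N}$ is unimodular.

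Next I would verify the orientability hypotheses needed to invoke the earlier results. The open set $N$ inherits orientability from $U$, and the $\mathbb{H}$-nondegenerate coupling form yields the transversal volume element $\sigma^{l}\in\Gamma(\wedge^{\operatorname{top}}\mathbb{V}^{0})$, so $\mathcal{V}$ is transversally orientable; combined with orientability of $N$, this makes $\mathcal{V}$ orientable as well. Crucially, $\mathcal{V}^{\pi}$ is the simple foliation of the submersion $\pi:N\rightarrow S$ onto the orientable symplectic leaf $S$, so by Example \ref{ex:FiberReeb} its Reeb class vanishes, $\operatorname{Mod}(\mathcal{V}^{\pi})=0$.

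The conclusion then follows by combining two earlier statements. Lemma \ref{lemma:Unimods}, applied to the unimodular coupling structure $\Pi|_{N}$, gives $\operatorname{Mod}(N,\mathcal{V},P)=0$. Feeding $\operatorname{Mod}(\mathcal{V}^{\pi})=0$ into the relation \eqref{eq:ModsAndReeb} of Proposition \ref{prop:ModsAndReeb} collapses it to $\operatorname{Mod}(N,P)=\operatorname{Mod}(N,\mathcal{V},P)$, whence $\operatorname{Mod}(N,P)=0$; that is, $P$ is unimodular.

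I do not expect a serious analytic obstacle here: the entire content lies in the vanishing Reeb class, which converts the foliated unimodularity supplied by Lemma \ref{lemma:Unimods} into genuine unimodularity of $P$ via Proposition \ref{prop:ModsAndReeb}. The only points requiring care, and the closest thing to an obstacle, are the orientability bookkeeping---ensuring both leaf-wise and transversal orientability so that Proposition \ref{prop:ModsAndReeb} genuinely applies---and the verification that a coupling neighborhood can be chosen inside the given unimodular neighborhood with unimodularity preserved under restriction.
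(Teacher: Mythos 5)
Your proof is correct and takes essentially the same route as the paper's: Lemma \ref{lemma:Unimods} gives $\operatorname{Mod}(N,\mathcal{V}^{\pi},P)=0$, and Proposition \ref{prop:ModsAndReeb} together with the vanishing of the Reeb class of the fibration over the orientable leaf (Example \ref{ex:FiberReeb}) yields $\operatorname{Mod}(N,P)=0$. The extra details you supply---shrinking the coupling neighborhood inside the unimodular one, the restriction of unimodularity to open subsets, and the orientability bookkeeping---are precisely what the paper's one-line proof leaves implicit.
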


\begin{proof}
The statement follows from Lemma \ref{lemma:Unimods}, Proposition \ref{prop:ModsAndReeb} and the fact that in a tubular neighborhood of $S$, the Reeb class of the fibration is trivial.
\end{proof}

We say that the germ of the transverse Poisson structure at a point $q\in S$ is unimodular if there exists a submanifold $N_{q}$ of $M$ meeting the symplectic leaf $S$ at $q$ transversally, and such that
\[
\operatorname{Mod}(N_{q},P_{q})=0.
\]

\begin{theorem}\label{teo:Unimodular3}
Let $S$ be an embedded symplectic leaf of an orientable Poisson manifold $(M,\Pi)$ and $q\in S$ a fixed point. Assume that the germ at $q\in S$ of the transverse Poisson structure $P_{q}$ is unimodular. Then, one can choose a coupling neighborhood $(N\overset{\pi}{\rightarrow}S)$ of $S$ with properties: there exists a leaf-wise volume form $\tau\in\Gamma(\wedge^{\operatorname{top}}\mathbb{V}^{*})$ of the vertical subbundle $\mathcal{V}^{\pi}$ such that the modular vector field of the Poisson foliation $(N,\mathcal{V}^{\pi},P)$ vanishes. Furthermore, the modular vector field of $\Pi|_{N}$ with respect to the volume form $\Omega=\sigma^{l}\wedge\tau_{\gamma}$ is tangent to the symplectic foliation and the corresponding modular class is given by
\[
\operatorname{Mod}(N,\Pi|_{N})=-(\Pi_{H}^{\sharp})^{*}[\theta_{\tau}^{\gamma}].
\]
Here, the divergence form $\theta_{\tau}^{\gamma}$ induced by the pair $(\tau,\gamma)$ is a 1-cocycle of the cochain complex $(\mathcal{C}^{\bullet}, \overline{\operatorname{d}}^{\gamma})$.
\end{theorem}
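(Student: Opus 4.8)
The plan is to reduce everything, once a suitable leaf-wise volume form is in hand, to the already-established Proposition \ref{prop:CoupMod} and Theorem \ref{teo:Class}; the genuine work is therefore concentrated in producing a leaf-wise volume form $\tau$ on a coupling neighborhood with $Z_{P}^{\tau}=0$. First I would fix the neighborhood: using the semilocal realization recalled at the start of this section \cite{Vo-01}, choose $N$ to be a tubular neighborhood of $S$ diffeomorphic to the normal bundle $E=T_{S}M/TS$, with $\pi:N\rightarrow S$ the bundle projection. Since $M$ is orientable, $E$ and hence $N$ is orientable, while $S$ is orientable as a symplectic manifold. The restriction $\Pi|_{N}$ is then a $\mathcal{V}^{\pi}$-coupling Poisson structure with geometric data $(\gamma,\sigma,P)$ and associated Poisson foliation $(N,\mathcal{V}^{\pi},P)$, whose fibers $(N_{q'},P_{q'})$ are the transverse slices.

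The key step is to exhibit $\tau$ with $Z_{P}^{\tau}=0$. The transverse Poisson structures at any two points of the connected symplectic leaf $S$ are isomorphic as germs; since unimodularity is invariant under Poisson isomorphism, the germ-unimodularity hypothesis at $q$ propagates to every $q'\in S$. Shrinking $N$ if necessary, the semilocal normal form lets me regard $\pi:(N,P)\rightarrow S$ as a locally trivial Poisson fiber bundle whose typical fiber is the unimodular transverse slice $(F,P_{F})\cong(N_{q},P_{q})$. With $N$ and $S$ orientable and the fiber unimodular, I can now run exactly the construction in the proof of Corollary \ref{cor:LocallyTrivialMod}: trivialize over an open cover $\{U_{i}\}$ of $S$, choose on each piece a fiber-wise volume form $\tau_{i}$ realizing a vanishing modular vector field, and glue by a partition of unity $\phi_{i}=\pi^{*}\psi_{i}$ pulled back from $S$. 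On each fiber $N_{q}$ the glued form equals a fixed $\tau_{i}|_{N_{q}}$ multiplied by a positive combination of $P_{q}$-Casimir functions, hence by a positive Casimir factor, so $Z_{P}^{\tau}|_{N_{q}}=Z_{P_{q}}^{\tau|_{N_{q}}}=0$ and therefore $Z_{P}^{\tau}=0$ on all of $N$.

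I expect this second step to be the main obstacle: it requires upgrading the pointwise isomorphism of transverse structures along $S$ to a smooth local Poisson triviality of the family, which is precisely what makes the partition-of-unity gluing preserve the \emph{exact} vanishing $Z_{P}^{\tau}=0$ rather than merely the vanishing of the foliated modular class $\operatorname{Mod}(N,\mathcal{V}^{\pi},P)$. (For the latter alone, Lemma \ref{lemma:Unimods}, Proposition \ref{prop:ModsAndReeb} and the triviality of the Reeb class of the fibration already suffice, as in Proposition \ref{prop:LeafMods}; the point here is the stronger fiber-wise statement.)

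Finally, with $Z_{P}^{\tau}=0$ secured, the remaining assertions follow formally. Proposition \ref{prop:CoupMod} gives the modular vector field $Z=Z_{\Pi|_{N}}^{\Omega}$ relative to $\Omega=\sigma^{l}\wedge\tau_{\gamma}$ with bigraded components $Z_{0,1}=Z_{P}^{\tau}=0$ and $Z_{1,0}=-\Pi^{\sharp}(\theta_{\tau}^{\gamma})$; hence $Z=-\Pi^{\sharp}(\theta_{\tau}^{\gamma})$ lies in the image of $\Pi^{\sharp}$ and is thus tangent to the symplectic foliation. Moreover $Z_{P}^{\tau}=0$ forces $\operatorname{Mod}(N,\mathcal{V}^{\pi},P)=0$, so Theorem \ref{teo:Class} applies on $N$, yielding $\theta_{\tau}^{\gamma}\in\mathcal{C}^{1}$ with $\overline{\operatorname{d}}^{\gamma}\theta_{\tau}^{\gamma}=0$ and the identity $\operatorname{Mod}(N,\Pi|_{N})=-(\Pi_{H}^{\sharp})^{*}[\theta_{\tau}^{\gamma}]$, which completes the proof.
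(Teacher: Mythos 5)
Your proposal is correct and follows essentially the same route as the paper's proof: choose a coupling neighborhood over which $(N\overset{\pi}{\rightarrow}S,P)$ is a locally trivial Poisson fiber bundle with typical fiber the unimodular transverse structure $(N_{q},P_{q})$, produce a leaf-wise volume form with $Z_{P}^{\tau}=0$ by the partition-of-unity gluing from the proof of Corollary \ref{cor:LocallyTrivialMod}, and then conclude via Proposition \ref{prop:CoupMod} and Theorem \ref{teo:Class}. The extra details you supply (propagation of the germ-unimodularity along the connected leaf, and the Casimir transition functions that make the gluing preserve the exact vanishing $Z_{P}^{\tau}=0$) are precisely what the paper's terser argument leaves implicit.
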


\begin{proof}
Choose a coupling neighborhood $N$ such that the Poisson fiber bundle $(N\overset{\pi}{\rightarrow}S,P)$ is locally trivial with typical fiber $(N_{q},P_{q})$. Then, by the proof of Corollary \ref{cor:LocallyTrivialMod} we conclude that $\operatorname{Mod}(N,\mathcal{V}^{\pi},P)=0$. From Theorem \ref{teo:Class}, we derive the desired result.
\end{proof}

\paragraph{Flat Coupling Neighborhoods.} We say that a coupling neighborhood $N\overset{\pi}{\rightarrow}S$ over the leaf $S$ is \emph{flat} if there exists a regular foliation $\mathcal{F}$ on $N$ such that (i) the tangent bundle $\mathbb{F}:=T\mathcal{F}$ is complementary to the vertical subbundle $\mathbb{V}$ of $\pi$; (ii) each $\pi$-projectable section of $\mathbb{F}$ is a Poisson vector field (an infinitesimal automorphism) of the transverse Poisson structure $P\in\Gamma(\wedge^{2}\mathbb{V})$; (iii) the foliation is compatible with the Poisson connection $\gamma$ on $N$ associated to the horizontal subbundle $\mathbb{H}$ in the following sense:
\[
X-\gamma(X)\text{ is tangent to }P^{\sharp}(T^{*}M) \qquad\forall X\in\Gamma_{\mathrm{pr}}(\mathbb{F}).
\]

\begin{theorem}\label{teo:Unimodular2}
Let $S$ be an embedded symplectic leaf of an orientable Poisson manifold $(M,\Pi)$ which admits a flat coupling neighborhood $(N\overset{\pi}{\rightarrow}S,\mathcal{F})$. Let $P$ be the transverse Poisson structure on $N$ of the leaf. If $H_{\overline{\partial}_{\mathcal{F}}}^{1}=\{0\}$, then the following assertions are equivalent:
\begin{itemize}
\item[(a)] the restriction of $\Pi$ to $N$ is unimodular;
\item[(b)] the Poisson manifold $(N,P)$ is unimodular;
\item[(c)] the Poisson fibration $(N\overset{\pi}{\rightarrow}S,P)$ is unimodular,
\begin{equation}\label{CON1}
\operatorname{Mod}(N,\mathcal{V}^{\pi},P)=0.
\end{equation}
\end{itemize}
\end{theorem}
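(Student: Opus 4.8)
The plan is to deduce the three-fold equivalence by recognizing that $(N,\Pi|_N)$ falls precisely into the setting of Theorem \ref{teo:Unimodular}, and then to collapse condition (b) onto condition (c) using the triviality of the Reeb class of the fibration. First I would record that the flat coupling neighborhood $(N\overset{\pi}{\rightarrow}S,\mathcal{F})$ makes $(N,\mathcal{V}^{\pi},P,\mathcal{F})$ into a flat Poisson foliation: properties (i) and (ii) in the definition are exactly the requirements that $\mathbb{F}=T\mathcal{F}$ be complementary to $\mathbb{V}=\ker d\pi$ and that every $\pi$-projectable section of $\mathbb{F}$ lie in $\operatorname{Poiss}(N,P)$, i.e.\ condition \eqref{eq:FlatPoissConn}. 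Property (iii) is the compatibility condition of Section \ref{sec:Flat}, so $\Pi|_N$ is a compatible $\mathcal{V}^{\pi}$-coupling Poisson structure with $\Pi_{0,2}=P$; in particular $\overline{\operatorname{d}}^{\gamma}=\overline{\partial}_{\mathcal{F}}$ and $H^{1}_{\overline{\operatorname{d}}^{\gamma}}=H^{1}_{\overline{\partial}_{\mathcal{F}}}$. Since $N$ is open in the orientable manifold $M$ and the base $S$ is orientable, the foliation $\mathcal{V}^{\pi}$ is orientable and transversally orientable, and by Example \ref{ex:FiberReeb} its Reeb class is trivial, $\operatorname{Mod}(\mathcal{V}^{\pi})=0$.

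With this in place, the equivalence of (a) and (c) is immediate from Theorem \ref{teo:Unimodular}: under the hypothesis $H^{1}_{\overline{\partial}_{\mathcal{F}}}=\{0\}$ the short exact sequence \eqref{eq:ShortExact} forces the projection $\gamma^{*}$ to be an isomorphism, which by Corollary \ref{cor:ProyMod} carries $\operatorname{Mod}(N,\Pi|_N)$ to $\operatorname{Mod}(N,\mathcal{V}^{\pi},P)$. Hence $\operatorname{Mod}(N,\Pi|_N)=0$ if and only if \eqref{CON1} holds, which is exactly (a)$\,\Leftrightarrow\,$(c).

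For the equivalence of (b) and (c) I would apply Proposition \ref{prop:ModsAndReeb} to the orientable, transversally orientable Poisson foliation $(N,\mathcal{V}^{\pi},P)$, which gives
\[
\operatorname{Mod}(N,P)=\operatorname{Mod}(N,\mathcal{V}^{\pi},P)-(P^{\sharp})^{*}\operatorname{Mod}(\mathcal{V}^{\pi}).
\]
Because $\operatorname{Mod}(\mathcal{V}^{\pi})=0$, the last term drops out and $\operatorname{Mod}(N,P)=\operatorname{Mod}(N,\mathcal{V}^{\pi},P)$; thus $(N,P)$ is unimodular precisely when \eqref{CON1} holds. Chaining the two equivalences yields (a)$\,\Leftrightarrow\,$(b)$\,\Leftrightarrow\,$(c).

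The routine content is light here, so the work lies in verifying hypotheses rather than in a computation. The step I expect to need the most care is confirming that property (iii) of a flat coupling neighborhood is genuinely the compatibility condition of Section \ref{sec:Flat} — that is, that $\gamma_{0}(X)-\gamma(X)$ is $P^{\sharp}$-tangent for \emph{all} $X\in\Gamma(TM)$ and not merely for projectable sections of $\mathbb{F}$ — so that the identification $\overline{\operatorname{d}}^{\gamma}=\overline{\partial}_{\mathcal{F}}$ holds and Theorem \ref{teo:Unimodular} applies verbatim. A secondary point worth stating explicitly is the orientability bookkeeping: that $N$ and $S$ orientable indeed yield a leaf-wise volume form $\tau\in\Gamma(\wedge^{\operatorname{top}}\mathbb{V}^{*})$ together with transversal orientability of $\mathcal{V}^{\pi}$, which is precisely what both Proposition \ref{prop:ModsAndReeb} and the vanishing of the Reeb class require.
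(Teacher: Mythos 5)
Your proposal is correct and takes essentially the same route as the paper's own proof: the equivalence (a)$\,\Leftrightarrow\,$(c) is obtained from Theorem \ref{teo:Unimodular} applied to $\Pi|_{N}$ viewed as a compatible coupling structure on the flat Poisson foliation $(N,\mathcal{V}^{\pi},P,\mathcal{F})$, and (b)$\,\Leftrightarrow\,$(c) follows from Proposition \ref{prop:ModsAndReeb} together with $\operatorname{Mod}(\mathcal{V}^{\pi})=0$, which holds by Example \ref{ex:FiberReeb} since the leaf $S$ is orientable. You merely make explicit the hypothesis-checking (compatibility of condition (iii) with Section \ref{sec:Flat}, and the orientability bookkeeping) that the paper's two-sentence proof leaves implicit; the point you flag about condition (iii) is settled by noting that tangency to $P^{\sharp}(T^{*}M)$ is a $C^{\infty}(N)$-linear, hence pointwise, condition, so its validity on projectable sections of $\mathbb{F}$ (which locally span $\mathbb{F}$) extends to all of $\Gamma(TM)$.
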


\begin{proof}
By Theorem \ref{teo:Unimodular}, the assertions of items (a) and (c) are equivalent. The equivalence of (b) and (c) follows from Proposition \ref{prop:ModsAndReeb} and the orientability of the symplectic leaf $S$.
\end{proof}

Suppose we are given a flat Poisson fiber bundle $(\pi:M\rightarrow S,P,\mathcal{F})$ over a symplectic base $(S,\omega)$. Assume that $S$ is an embedded submanifold of $M$, the inclusion map $\iota:S\hookrightarrow M$ is a section of $\pi$, $T_{S}\mathcal{F}=TS$ and the vertical Poisson structure $P\in\Gamma(\wedge^{2}\mathbb{V})$ vanishes at the points of $S$. Let $\gamma_{0}$ be the flat Poisson connection on the Poisson fiber bundle $(\pi:M\rightarrow S,P)$ associated to the foliation $\mathcal{F}$. Denote by $\operatorname{hor}^{\gamma_{0}}$ the corresponding $\gamma_{0}$-horizontal lift and by $\psi\in\Gamma(\wedge^{2}TS)$ the nondegenerated Poisson tensor of the symplectic manifold $S$. Then, putting $\sigma_{0}=\pi^{*}\omega$, we get that formula \eqref{FLP} gives the following flat coupling Poisson tensor on $M$: $\Pi_{\operatorname{flat}}=\operatorname{hor}^{\gamma_{0}}(\psi)+P$. It is clear that $(S,\omega)$ is a symplectic leaf of $\Pi_{\operatorname{flat}}$. Moreover, for a given horizontal 1-form $\mu\in\Gamma(\mathbb{V}^{0})$ on $M$ vanishing along $S$, $\iota^{*}\mu=0$, there exists a neighborhood $N$ of $S$ in $M$, such that the gauge transformation \eqref{eq:Gau1}, \eqref{eq:Gau2} associated to $\mu$ is well-defined. Therefore, $N$ is a flat coupling neighborhood of $S$ for the deformed Poisson structure $\widetilde{\Pi}_{\operatorname{flat}}$. We get from Theorem \ref{teo:UnimodFlat} that the injectivity condition \eqref{eq:Injec} together with $\operatorname{Mod}(\mathcal{F})=0$ and \eqref{CON1} provides the unimodularity of $\widetilde{\Pi}_{\operatorname{flat}}$ (see, Section \ref{sec:Flat}).

\bigskip\noindent\textbf{Acknowledgements.} {We are very grateful to Misael Avenda\~no-Camacho, Rub\'en Flores-Espinoza, and Jos\'e C. Ru\'iz-Pantale\'on for several illuminating discussions and comments on this work. We are also grateful to an anonymous Referee for the helpful remarks which improved the presentation and the content of the manuscript. E.V.-B. and Yu.V. have been supported by Consejo Nacional de Ciencia y Tecnolog\'ia (CONACYT) under the research grant 219631.

\bibliographystyle{acm}
\bibliography{UnimodPoissFol-CorrectedVersion.bbl}

\begin{thebibliography}{10}

\bibitem{AB-03}
{\sc Abouqateb, A., and Boucetta, M.}
\newblock The modular class of a regular {P}oisson manifold and the {R}eeb
  class of its symplectic foliation.
\newblock {\em C. R. Math. Acad. Sci. Paris 337}, 1 (2003), 61--66.

\bibitem{Br-05}
{\sc Bursztyn, H.}
\newblock On gauge transformations of {P}oisson structures.
\newblock In {\em Quantum field theory and noncommutative geometry}, vol.~662
  of {\em Lecture Notes in Phys.} Springer, Berlin, 2005, pp.~89--112.

\bibitem{CaFe-13}
{\sc Caseiro, R., and Fernandes, R.~L.}
\newblock The modular class of a {P}oisson map.
\newblock {\em Ann. Inst. Fourier (Grenoble) 63}, 4 (2013), 1285--1329.

\bibitem{Cra-03}
{\sc Crainic, M.}
\newblock Differentiable and algebroid cohomology, van {E}st isomorphisms, and
  characteristic classes.
\newblock {\em Comment. Math. Helv. 78}, 4 (2003), 681--721.

\bibitem{DZ}
{\sc Dufour, J.-P., and Zung, N.~T.}
\newblock {\em {P}oisson structures and their normal forms}, vol.~242 of {\em
  Progress in Mathematics}.
\newblock Birkh\"auser Verlag, Basel, 2005.

\bibitem{ELW}
{\sc Evens, S., Lu, J.-H., and Weinstein, A.}
\newblock Transverse measures, the modular class and a cohomology pairing for
  {L}ie algebroids.
\newblock {\em Quart. J. Math. Oxford Ser. (2) 50}, 200 (1999), 417--436.

\bibitem{GiGo-2001}
{\sc Ginzburg, V.~L., and Golubev, A.}
\newblock Holonomy on {P}oisson manifolds and the modular class.
\newblock {\em Israel J. Math. 122\/} (2001), 221--242.

\bibitem{GMP-2011}
{\sc Guillemin, V., Miranda, E., and Pires, A.~R.}
\newblock Codimension one symplectic foliations and regular {P}oisson
  structures.
\newblock {\em Bull. Braz. Math. Soc. (N.S.) 42}, 4 (2011), 607--623.

\bibitem{Hur-86}
{\sc Hurder, S.}
\newblock The {G}odbillon measure of amenable foliations.
\newblock {\em J. Differential Geom. 23}, 3 (1986), 347--365.

\bibitem{Hur-02}
{\sc Hurder, S.}
\newblock Dynamics and the {G}odbillon-{V}ey class: a history and survey.
\newblock In {\em Foliations: geometry and dynamics ({W}arsaw, 2000)}. World
  Sci. Publ., River Edge, NJ, 2002, pp.~29--60.

\bibitem{KMS-93}
{\sc Kol\'a\v{r}, I., Michor, P.~W., and Slov\'ak, J.}
\newblock {\em Natural operations in differential geometry}.
\newblock Springer-Verlag, Berlin, 1993.

\bibitem{KS-00}
{\sc Kosmann-Schwarzbach, Y.}
\newblock Modular vector fields and {B}atalin - {V}ilkovisky algebras.
\newblock In {\em Poisson geometry ({W}arsaw, 1998)}, vol.~51 of {\em Banach
  Center Publ.} Polish Acad. Sci. Inst. Math., Warsaw, 2000, pp.~109--129.

\bibitem{KS-08}
{\sc Kosmann-Schwarzbach, Y.}
\newblock {P}oisson manifolds, {L}ie algebroids, modular classes: a survey.
\newblock {\em SIGMA Symmetry Integrability Geom. Methods Appl. 4\/} (2008),
  Paper 005, 30.

\bibitem{Koszul}
{\sc Koszul, J.-L.}
\newblock Crochet de {S}chouten - {N}ijenhuis et cohomologie.
\newblock {\em Ast\'erisque}, Num\'ero Hors S\'erie (1985), 257--271.
\newblock The mathematical heritage of \'Elie Cartan (Lyon, 1984).

\bibitem{Va-04}
{\sc Vaisman, I.}
\newblock Coupling {P}oisson and {J}acobi structures on foliated manifolds.
\newblock {\em Int. J. Geom. Methods Mod. Phys. 1}, 5 (2004), 607--637.

\bibitem{VallVo-14}
{\sc Vallejo, J.~A., and Vorobiev, Y.}
\newblock Invariant {P}oisson realizations and the averaging of {D}irac
  structures.
\newblock {\em SIGMA Symmetry Integrability Geom. Methods Appl. 10\/} (2014),
  Paper 096, 20.

\bibitem{VeVo-16}
{\sc Velasco-Barreras, E., and Vorobiev, Y.}
\newblock On the splitting of infinitesimal {P}oisson automorphisms around
  symplectic leaves.
  \href{https://arxiv.org/abs/1604.01414}{\texttt{arxiv}\texttt{:1604}\texttt{.01414}}
  [math.sg].

\bibitem{Vo-01}
{\sc Vorobjev, Y.}
\newblock Coupling tensors and {P}oisson geometry near a single symplectic
  leaf.
\newblock In {\em Lie algebroids and related topics in differential geometry
  ({W}arsaw, 2000)}, vol.~54 of {\em Banach Center Publ.} Polish Acad. Sci.
  Inst. Math., Warsaw, 2001, pp.~249--274.

\bibitem{Vo-05}
{\sc Vorobjev, Y.}
\newblock Poisson equivalence over a symplectic leaf.
\newblock In {\em Quantum algebras and {P}oisson geometry in mathematical
  physics}, vol.~216 of {\em Amer. Math. Soc. Transl. Ser. 2}. Amer. Math.
  Soc., Providence, RI, 2005, pp.~241--277.

\bibitem{SeWe-01}
{\sc \v{S}evera, P., and Weinstein, A.}
\newblock Poisson geometry with a 3-form background.
\newblock {\em Progr. Theoret. Phys. Suppl.}, 144 (2001), 145--154.
\newblock Noncommutative geometry and string theory (Yokohama, 2001).

\bibitem{We-83}
{\sc Weinstein, A.}
\newblock The local structure of {P}oisson manifolds.
\newblock {\em J. Differential Geom. 18}, 3 (1983), 523--557.

\bibitem{We-97}
{\sc Weinstein, A.}
\newblock The modular automorphism group of a {P}oisson manifold.
\newblock {\em J. Geom. Phys. 23}, 3-4 (1997), 379--394.

\end{thebibliography}
\end{document}